\newtheorem{theorem}{Theorem}[section]
\newtheorem{lemma}[theorem]{Lemma}
\newtheorem{proposition}[theorem]{Proposition}
\theoremstyle{definition}
\newtheorem{definition}[theorem]{Definition}
\newtheorem{example}[theorem]{Example}
\theoremstyle{remark}
\newtheorem{remark}[theorem]{Remark}
\numberwithin{equation}{section}
\begin{document}

\title[$\varphi $-strong solutions and uniqueness of SDE\lowercase{s}]{$\varphi $-strong solutions and uniqueness of $1$-dimensional stochastic
differential equations}

\author{Mihai N. Pascu}
\address{Faculty of Mathematics and Computer Science, Transilvania University of Bra\c{s}ov, Str. Iuliu Maniu Nr. 50, Bra\c{s}ov -- 500091, ROMANIA}
\email{mihai.pascu@unitbv.ro}
%\thanks{This paper was supported by the Sectorial Operational Programme Human Resources Deve\-lop\-ment (SOP HRD), financed from the European Social Fund and by the Romanian Government under the contract number SOP HRD/89/1.5/S/62988.}
%\dedicatory{To Nicolae, Ana, and \c{S}tefan}
\subjclass[2000]{Primary 60H10. Secondary 60J65, 60G99.} %
\keywords{Stochastic differential equation, $\varphi$-strong solution, $\varphi$-strong uniqueness, i.i.d. sign choice, representation of solutions of SDEs.} %

\begin{abstract}
In this paper we consider stochastic differential equations with
discontinuous diffusion coefficient of varying sign, for which weak existence and uniqueness holds but strong
uniqueness fails. We introduce the notion of $\varphi $-strong solution, and
we show that under certain conditions on the diffusion coefficient a $\varphi
$-strong solution exists and it is unique. We also give a construction of a $\varphi$-strong solution and a weak solution in terms of the notion of \emph{i.i.d. sign choice} introduced in the paper, and we give an explicit representation of the set of all weak solutions, which in particular explains the reason for the lack of strong existence and uniqueness. %shows that they are unique up to a sign choice.
\end{abstract}
\maketitle

\section{Introduction}

This paper aims to give a better understanding of the gap between weak and strong existence and uniqueness of solutions of $1$-dimensional stochastic differential equations (SDEs), more precisely the case when weak existence and uniqueness in the sense of probability law holds but strong existence or strong uniqueness fails. To this end,  we introduce the notion of $\varphi$-strong solution (and the corresponding notions of existence and uniqueness), which interpolates between a strong solution and a weak solution of a SDE. In Theorem \ref{theorem 1} and Theorem \ref{theorem 2} we study two classes of SDEs with discontinuous diffusion coefficient of varying sign, for which we prove $\varphi$-strong existence and uniqueness for a suitable function $\varphi$. We also give an explicit construction of a $\varphi$-strong solution (also a weak solution) in terms of an \emph{i.i.d. sign choice} for a certain process, and we give a representation of all weak solutions. This representation explains, on one hand, the reason for uniqueness in the sense of probability law of the weak solutions, and on the other it explains the lack of strong existence and uniqueness: a generic weak solution is not determined only by the driving Brownian motion of the SDE, but also by a process independent of it (the i.i.d. sign choice).

Consider the stochastic differential equation%
\begin{equation}
X_{t}=X_{0}+\int_{0}^{t}\sigma \left( X_{s}\right)
dB_{s}+\int_{0}^{t}b\left( X_{s}\right) ds,\qquad t\geq 0,  \label{SDE}
\end{equation}
where $\sigma$ and $b$ are Borel-measurable functions and $B$ is a $1$-dimensional Brownian motion on a fixed probability space $(\Omega,\mathcal{F},P)$.

%Recall that a \emph{strong solution} of (\ref{SDE}) with respect to the Brownian motion $B$ and initial condition $\xi$ is a process $X$ with continuous sample paths which satisfies the following properties:
%\begin{itemize}
%\item[i)] $X$ is  adapted with respect to the augmented filtration generated by $B$ and the initial condition $\xi$;
%\item[ii)] $P(X_0=\xi)=1$;
%\item[iii)] $P(\left(\int_0^t \sigma^2(X_s) +\vert b(X_s) \right)  ds<\infty) =1$;
%\item[iv)] $X$ satisfies (\ref{SDE}).
%\end{itemize}

%We say that \emph{strong existence} holds for (\ref{SDE}) if for any Brownian motion $B$ and independent initial condition $\xi$, (\ref{SDE}) has a strong solution. If for any choice of the Brownian motion $B$ and the independent initial condition $\xi$, whenever $X$ and $\widetilde{X}$ are two strong solutions of (\ref{SDE}) we have $P\left( X_t=\widetilde{X}_t, \quad t\ge 0\right)=1$, we say that \emph{strong uniqueness} holds for (\ref{SDE}).

When referring to strong/weak solutions and existence/uniqueness of the
above SDE, we will use the classical terminology (see e.g., \cite%
{Karatzas-Shreve}).

The problem of existence of weak solutions and uniqueness in the sense of proba\-bility law for (\ref{SDE}) was completely solved by Engelbert and Schmidt (\cite{Engelbert-Schmidt '85}) in the case when the drift is identically zero. They showed that for any initial distribution of $X_0$, there exists a weak solution if and only if $I(\sigma)\subset Z(\sigma)$, and the solution is unique in the sense of probability law if and only if  $I(\sigma)=Z(\sigma)$, where $I(\sigma)=\{x\in \mathbb{R}: \int_{x-\varepsilon}^{x+\varepsilon} \frac{1}{\sigma^2(y)} dy =\infty,\quad \forall \varepsilon >0\}$ is the set of non-local integrability of $\sigma^{-2}$, and $Z(\sigma)=\{x\in \mathbb{R}: \sigma(x)=0\}$ is the set of zeroes of $\sigma$. Using the method of removal of drift, the case of a general drift can be reduced to the case when the drift is identically zero, thus obtaining sufficient conditions for weak existence and uniqueness (see \cite{Karatzas-Shreve}, pp. 339 -- 342).

One of the first results regarding the strong solutions of (\ref{SDE}) is due to the pioneering work of K. It\^{o} (\cite{Ito '46}), who showed that if the drift and the diffusion coefficient are Lipschitz  continuous functions, then strong existence and uniqueness holds for (\ref{SDE}). There are other sufficient conditions for the strong existence and uniqueness in the literature; for example, it is known that if $\sigma $ is Lipschitz continu\-ous of exponent $\frac{1}{2}$ and $b$ is Lipschitz continuous (\cite{Yamada-Watanabe}), or if $\sigma$, $b$ are bounded measurable functions, $\sigma $ satisfies $\vert \sigma(x)-\sigma(y)\vert \le \rho(\vert x-y \vert )$, $x,y\in\mathbb{R}$, for an increasing function $\rho$ with $\int_{0+}\frac{1}{\rho^2(x)} dx =\infty$, and $\vert \sigma \vert$ is bounded below away from zero (\cite{Le Gall}), there exists a unique strong solution of (\ref{SDE}) above. In particular, these hypotheses require the continuity of the diffusion coefficient $\sigma $. Extending a result of Nakao, Le Gall (\cite{Le Gall}) showed that if $\sigma, b$ are bounded measurable functions, $\sigma $ is bounded below away from zero and it satisfies the condition
\begin{equation}\label{Nakao condition}
\left\vert \sigma \left( x\right) -\sigma \left( y\right) \right\vert
^{2}\leq \left\vert f\left( x\right) -f\left( y\right) \right\vert ,\qquad
x,y\in \mathbb{R},  
\end{equation}%
for some bounded increasing function $f$, there exists a unique strong solution of (\ref{SDE}).

Note that the hypothesis (\ref{Nakao condition}) allows for countable many discontinuities of $\sigma $, but as a drawback, $\sigma $ (instead of $%
\left\vert \sigma \right\vert $) is required to be bounded below away from
zero. As pointed out in \cite{Le Gall}, these conditions are almost sharp,
since by relaxing any of the two conditions on $\sigma $ one can produce
SDEs for which the uniqueness fails: Barlow (\cite{Barlow}) showed that for
any $\alpha >2$ one can construct functions $\sigma $ of bounded variation
of order $\alpha $ which are bounded below away from zero, such that there
is not pathwise uniqueness for the SDE $X_{t}=\int_{0}^{t}\sigma \left(
X_{s}\right) dB_{s}$. Also, the classical example (attributed to Tanaka)
when $\sigma \left( x\right) =\mathrm{sgn}\left( x\right) $ and $b\equiv 0$
shows that we cannot replace the condition that $\sigma $ is bounded below
away from zero by the condition that $\left\vert \sigma \right\vert $ is
bounded below away from zero.

In this paper, we consider the case of SDEs for which the diffusion
coefficient $\sigma $ satisfies Le Gall's condition (\ref{Nakao condition})
and $\left\vert \sigma \right\vert $ is bounded below away from zero. As a
first step, we consider the SDE (\ref{SDE}) in the case when $b\equiv 0$, $%
\sigma $ has only one jump discontinuity at the origin, $\sigma$ is an odd function on $\mathbb{R}^\ast=\mathbb{R}-\{0\}$ (or a step function), and satisfies $x\sigma
\left( x\right) \geq 0$ for all $x\in \mathbb{R}$. Although under these
conditions strong existence and uniqueness may fail, we show that it is
possible to describe explicitly the set of all weak solutions. The generic
weak solution depends on an \emph{i.i.d. sign choice} (in the sense of
Definition \ref{iid sign choice}), and it is uniquely determined given such a
sign choice. This shows that although we cannot determine uniquely the
``output'' $X_t$ for a given the ``input'' $\left(B_s\right)_{0\le s\le t}$ from (\ref{SDE}), it may
be possible to determine uniquely a certain function $\varphi \left(X_t\right)$ of it.

This justifies the introduction of the notion of $\varphi $\emph{-strong solution }of an SDE (see Definition \ref{varphi strong solution}), which interpolates between the classical notions of weak solution and strong solution of an SDE.
% (if $\varphi $ is an injective function, the notion of $\varphi $-strong solution is the same as the classical notion of strong solution of an SDE).

We were led to considering the above type of SDEs by our recent work on
extending the mirror coupling of reflecting Brownian motions to the case
when the two processes live in different domains. In the simplest $1$%
-dimensional case, the problem can be reduced to constructing a solution of
the singular SDE%
\begin{equation}
X_{t}=\int_{0}^{t}\mathrm{sgn}\left( X_{s}-B_{s}\right) dB_{s},\qquad t\geq 0,
\end{equation}%
corresponding to a diffusion coefficient $\sigma $ of varying sign, with a
discontinuity at the origin, for which $\left\vert \sigma \right\vert =1$ is
bounded below away from zero.

The solution of the above SDE is not strongly unique, but one can construct
a carefully chosen weak solution $X$ with the property that $X$ and $B$
spend the least amount of time together (other choices are also possible).
This gives rise to the \emph{mirror coupling} of reflecting Brownian motions, and as
an application of this construction in  \cite{Pascu} we obtained a unifying proof of I. Chavel
and W. S. Kendall's results on Chavel's conjecture on the domain
monotonicity of the Neumann heat kernel.

The structure of the paper is as follows. In Section \ref{Section 1} we
first consider the classical example, attributed to Tanaka, when the diffusion
coefficient is given by $\sigma \left( x\right) =\mathrm{sgn}\left(
x\right) $, and we show that although the solution is not pathwise unique, its
absolute value is pathwise unique. Next, we introduce the
notions of \emph{$\varphi $-strong solution} of an SDE, \emph{sign choice} and \emph{i.i.d. sign choice} for a
process.

In Theorem \ref{theorem 1} we consider the case when the diffusion
coefficient is a step
function $\sigma _{a,b}=a1_{[0,\infty )}+b1_{(-\infty ,0)}$ for constants $a>0>b$, and we show that $\varphi _{a,b}$-strong
uniqueness holds in this case, with $\varphi _{a,b}\left( x\right)
=\int_{0}^{x}\frac{1}{\sigma_{a,b} \left( u\right) }du$. Moreover, we show that
any weak solution has the representation $X_t=\sigma _{a,b}\left( U_{t}\right)
Y_{t}$, $t\ge 0$, where $Y$ is the reflecting Brownian motion on $[0,\infty)$
with driving Brownian motion $B$ and $U$ is an i.i.d. sign choice for $Y$ (in the sense of Definition \ref{iid sign choice}).

In Section \ref{Section 2} we consider the case of an SDE for which the
diffusion coefficient $\sigma $ is an odd function on $\mathbb{R}^\ast$ satisfying Le Gall's
condition (\ref{Nakao condition}), $x\sigma(x)\geq 0$ for $x\in\mathbb{R}$, and $\left\vert \sigma \right\vert $ is
bounded between positive constants. In Theorem \ref{theorem 2} we show that $\left\vert x\right\vert $-strong uniqueness holds in this case, and
that any weak solution has the representation $X_{t}=U_{t}Y_{t}$, $t\ge 0$, where $Y$ is a certain time changed reflecting Brownian motion on $[0,\infty)$ and $U$ is an i.i.d. sign choice for $Y$.

We conclude with some remarks on the possibility of extending the theorem to
the general case of a diffusion coefficient $\sigma $ satisfying Le Gall's
condition (\ref{Nakao condition}) and for which $\left\vert \sigma \right\vert $ is bounded between
positive constants.

\section{A particular case\label{Section 1}}

Consider the particular case of (\ref{SDE}) when $\sigma \left( x\right) =%
\mathrm{sgn}(x)=\left\{
\begin{tabular}{ll}
$+1,$ & $x\geq 0$ \\
$-1,$ & $x<0$%
\end{tabular}%
\right. $ and $b\left( x\right) \equiv 0$, that is%
\begin{equation}
X_{t}=\int_{0}^{t}\mathrm{sgn}\left( X_{s}\right) dB_{s},\qquad t\geq 0.
\label{SDE1}
\end{equation}

This is a classical example (first considered by Tanaka, and later by many
authors, e.g. Zvonkin \cite{Zvonkin}, or \cite{Karatzas-Shreve})
when the strong uniqueness fails. Note that since $\mathrm{sgn}(x)$ is a
discontinuous function, the classical results on strong existence and
uniqueness of solutions (see for example \cite{Le Gall}, or \cite{Karatzas-Shreve}, p. 291) do not apply here. Also,
since $\mathrm{sgn}^{-2}(x)$ is everywhere integrable and $\mathrm{sgn}(x)$ has no zeroes, by the
results of Engelbert and Schmidt it follows that the SDE (\ref{SDE1}) has a weak solution and the
solution is unique in the sense of probability law.

\begin{remark}
It is not difficult to see directly that this is indeed the case.

Following \cite{Karatzas-Shreve} (Section 5.3, pp. 301 -- 302), first note that if $\left( X,B,\mathbb{F}\right) $ is a weak solution for (\ref{SDE1}), then $X$ is a
continuous, square integrable martingale with quadratic variation $\langle
X\rangle _{t}=t$, so by L\'{e}vy's theorem $X$ is a Brownian motion, and
therefore uniqueness in the sense of probability law holds for (\ref{SDE1}).

To see that (\ref{SDE1}) has a weak solution, one can consider a $1$-dimensional
Brownian motion $\left( X, \mathbb{F}\right)$, and define $B_{t}=\int_{0}^{t}\mathrm{sgn}\left(
X_{s}\right) dX_{s}$, $t\ge 0$, so $\left(B, \mathbb{F}\right)$ is a $1$-dimensional Brownian motion. Then $%
\int_{0}^{t}\mathrm{sgn}\left( X_{s}\right) dB_{s}=\int_{0}^{t}\left(
\mathrm{sgn}\left( X_{s}\right) \right) ^{2}dX_{s}=X_{t}$, so $\left(
X,B,\mathbb{F}\right)$ is a weak solution of (\ref{SDE1}). As a consequence, since $X$ and $-X$ are solutions of (\ref{SDE1}) at the same time,
pathwise uniqueness cannot hold for (\ref{SDE1}).

If (\ref{SDE1}) had a strong solution $X$ with respect to a fixed Brownian motion $B$ (so $\mathcal{F}_{t}^{X}\subset \mathcal{F}%
_{t}^{B}$ for all $t\geq 0$, where $\left(\mathcal{F}_t^X\right)_{t\ge 0}$ and $\left(\mathcal{F}_t^B\right)_{t \ge 0}$ denote the augmented filtrations generated by $X$, respectively $B$), then $B_{t}=\int_{0}^{t}\mathrm{sgn}\left(
X_{s}\right) dX_{s}$, so $\mathcal{F}_{t}^{B}\subset \mathcal{F}_{t}^{X}$,
and therefore $\mathcal{F}_{t}^{X}=\mathcal{F}_{t}^{B}$.

By the Tanaka formula (although our definition $\mathrm{sgn}(0)=1$ differs from the usual left continuous one, the calculations are the same since Brownian motion spends zero time at the origin) we also have $\int_{0}^{t}\mathrm{sgn}\left( X_{s}\right)
dX_{s}=\left\vert X_{t}\right\vert -\widehat{L}_{t}^{0}(\vert X\vert)$, where $\widehat{L}_t^0(\vert X\vert)=\lim_{%
\varepsilon \searrow 0} \frac{1}{2\varepsilon} \int_0^t 1_{[0, \varepsilon)}(\vert X_s\vert) ds$ is the symmetric local time
of $X$ at the origin, so $\mathcal{F}_{t}^{B}\subset \mathcal{F}%
_{t}^{\left\vert X\right\vert }$, which leads to a contradiction: $\mathcal{F%
}_{t}^{X}\subset \mathcal{F}_{t}^{\left\vert X\right\vert }$. The
contradiction shows that (\ref{SDE1}) does not have a strong solution.
\end{remark}

A stochastic differential equation like (\ref{SDE}) or (\ref{SDE1}) above
can be thought as a machinery which given the ``input'' $\left(B_{s}\right)_{0\le s\le t}$ produces the ``output'' $X_{t}$ (the \emph{principle of causality} for
dynamical systems, according to \cite{Karatzas-Shreve}). The fact that (\ref%
{SDE1}) does not have a strong solution shows that $X_{t}$ cannot be
determined from the input $\left(B_{s}\right)_{0\le s \le t}$. Even though the output $X_{t}$ cannot be
`` predicted'' from the input $\left(B_{s}\right)_{0\le s \le t}$, we can still say something about a process $X$ which satisfies (\ref{SDE1}).

Using again Tanaka formula as above, if $X$ verifies (\ref{SDE1}) we obtain
\[
\left\vert X_{t}\right\vert =\int_{0}^{t}\mathrm{sgn}\left( X_{s}\right)
dX_{s}+\widehat{L}_{t}^{0}(\vert X\vert)=\int_{0}^{t} \mathrm{sgn}^2\left( X_{s}\right) dB_{s}+\widehat{L}_{t}^{0}(\vert X \vert)=B_{t}+\widehat{L}_{t}^{0}(\vert X \vert ),
\]
where $\widehat{L}_t^0(\vert X\vert)$ is the symmetric local time of $X$ at the origin.

By classical results on the pathwise uniqueness of reflecting Brownian
motion on $[0,\infty )$, $\left\vert X\right\vert $ is pathwise unique
(it is the reflecting Brownian motion on $[0,\infty )$ with driving Brownian
motion $B$).

So, although the process $X$ is not uniquely determined by $B$, its absolute
value $\left\vert X\right\vert $ is. We introduce the following.

\begin{definition}
\label{varphi strong solution}Consider $\varphi :\mathbb{R\rightarrow R}$ a
measurable function. A \emph{$\varphi $-strong solution} of the stochastic differential equation (\ref{SDE}) on a probability space $\left(
\Omega ,\mathcal{F},P\right) $ with respect to the Brownian motion $B$
is a continuous process $X$ for which (\ref{SDE}) holds a.s., and such
that the process $\varphi \left( X\right) $ is adapted to the augmented filtration
generated by $B$ and $P\left( \int_{0}^{t}(\left\vert b\left(
X_{s}\right) \right\vert +\sigma ^{2}\left( X_{s}\right)) ds<\infty \right)
=1 $ holds for every $t\geq 0$.

We say that \emph{$\varphi $-strong uniqueness} holds for (\ref{SDE}) if whenever $%
X$ and $\widetilde{X}$ are two $\varphi $-strong solutions relative
to the same driving Brownian motion $B$, we have $$P\left( \varphi \left(
X_{t}\right) =\varphi \big( \widetilde{X}_{t} \big) ,\quad t\geq 0\right)
=1.$$
\end{definition}

Similar definitions can be given for a $\varphi$-weak solution, $\varphi $-pathwise and $\varphi $-uniqueness in the sense of probability law.

Note that in the previous definition, the process $X$ is not required to
be adapted to the augmented filtration of the Brownian motion $B$. If
$X$ is adapted to the filtration $\left(\mathcal{F}_t\right)_{t\ge 0}$, where $\mathcal{F}_t$ is the augmented $\sigma $-algebra genera\-ted by $%
\sigma \left( B_{s}:s\leq t\right) \cup \mathcal{G}_{t}$, then the collection of events in $\mathcal{G}_{t}$ can be viewed as the extra source of randomness
needed in order to be able to predict the output $X_t$ from (\ref{SDE}) corresponding to a given input $\left(B_s\right)_{0\le s \le t}$.

\begin{remark}
It is easy to see that the above definitions are a natural extension of the
classical notions of strong solution and strong uniqueness, in the sense that
if $\varphi :\mathbb{R\rightarrow R}$ is an injective function,
then a $\varphi $-strong solution is the same as a strong solution and $%
\varphi $-strong uniqueness is the same as strong uniqueness for (\ref{SDE}).
\end{remark}

\begin{remark}
The above discussion shows that even though pathwise uniqueness and the
existence of a strong solution does not hold for SDE (\ref{SDE1}), $\left\vert
x\right\vert $-strong uniqueness holds for it, and we will also show that a $\left\vert x\right\vert $-strong solution exists.
\end{remark}

To do this, we will consider the more general case of the SDE%
\begin{equation}
X_{t}=\int_{0}^{t}\sigma _{a,b}\left( X_{s}\right) dB_{s},\qquad t\geq 0,
\label{SDE2}
\end{equation}%
where $\sigma _{a,b}:\mathbb{R\rightarrow R}$ is given by%
\begin{equation}
\sigma _{a,b}\left( x\right) =\left\{
\begin{tabular}{ll}
$a,$ & $x\geq 0$ \\
$b,$ & $x<0$%
\end{tabular}%
\right. ,  \label{sigma_a,b}
\end{equation}%
and $a,b\in \mathbb{R}^{\ast }=\mathbb{R}-\{0\}$ are arbitrary constants.

We first introduce the notion of \emph{sign choice} for a non-negative
process, as follows.

\begin{definition}\label{sign choice}
A \emph{sign choice} for a given a non-negative continuous process $\left(
Y_{t}\right) _{t\geq 0}$ is a process $\left( U_{t}\right) _{t\geq 0}$ taking the values $\pm 1$, such that $\left( U_{t}Y_{t}\right) _{t\geq 0}$ is a continuous
process.
\end{definition}

\begin{remark}
The condition that $Y$ is a non-negative process is not essential in the above definition and it can be dispensed off. It is only meant to suggest that the sign of the process $UY$ at time $t\ge 0$ is equal to $U_t$, thus justifying the name of the process $U$.
\end{remark}

The trivial choices $U \equiv 1$ and $U \equiv -1$ in the above definition give $U Y\equiv Y$, respectively the reflection $- Y$ of $Y$ with respect to the origin. The interesting cases of the above definition are those for which the sign choice $U$ takes both $\pm 1$ values  with positive probabilities, so the process $U Y$ is a combination between the original process $Y$ and the process $-Y$. Among these choices is the \emph{i.i.d. sign choice}, which corresponds to the intuitive idea of flipping a biased coin in order to choose the sign $U_t$ of the excursion of $Y$ straddling $t$. We will make this definition precise below.

Given a non-negative continuous adapted process $(Y_t)_{t\geq 0}$ with $Y_0=0$ on a filtered probability  space $(\Omega, \mathcal{F},P, (\mathcal{F}_t)_{t \geq 0})$, consider $${Z}={Z}^Y(\omega)=\{t\geq 0 :Y_t(\omega)=0\}$$ the zero set of $Y(\omega)$, $\omega\in\Omega$ (from now on, we will drop the dependence on $\omega\in \Omega$ whenever this will not create confusion). Since $Y$ is continuous, ${Z}^c=(0,\infty)-{Z}$ is an open set, hence it consists of a countable disjoint union (possibly finite, or even empty) of open intervals, called the \emph{excursion intervals} of $Y$ away from $0$. Denote by $\mathcal{I}$ the set of all excursion intervals of $Y$ away from zero, so ${Z}^c=\cup_{I\in \mathcal{I}} I$.

Next, we will order and index the excursions intervals in $\mathcal{I}$ according to their length and position, as follows. Consider the $(\mathcal{F}_t)_{t\ge 0}$-stopping times defined recursively by $\xi_0=0$ and
\begin{equation}
\xi_{i+1}=\inf\{t\geq \xi_i+1: Y_t=0\},\qquad i \geq 1,
\label{stopping times tau_m}
\end{equation}
where as usual $\inf \varnothing = \infty$. Note that it is possible that $\xi_{i+1}= \infty$, in which case $\xi_j=\infty$ for all $j\geq i+1$.

Consider the sets $\mathcal{I}_i=\{I\in \mathcal{I}: I\subset (\xi_{i-1},\xi_{i})\}$, $i\geq 1$, and note that by the definition of the stopping times $\xi_i$, $\{\mathcal{I}_i\}_{i\geq1}$ is a partition of $\mathcal{I}$. It is possible that some of the sets in this partition are empty: if $j=\min \{ i: \xi_i=\infty\}$, then $\xi_k=\infty$ for $k\geq j$, so $\mathcal{I}_k=\varnothing $ for all $k > j$.

For an excursion interval $I=(a,b)\in \mathcal{I}$ we will denote by $\vert I\vert=b-a$ the length of $I$, and by $I^l=a$ the left endpoint of $I$.

We introduce an order relation $\preceq$ on $\mathcal{I}$ by defining $I \preceq \widetilde{I}$ if $\vert I \vert >\vert \widetilde{I}\vert$, or $\vert I \vert = \vert \widetilde{I} \vert$ and $I^l < \widetilde{I}^l$. It is not difficult to see that $(\mathcal{I},\preceq )$ is a totally ordered set.

For any arbitrary fixed $i\geq 1$ for which $\mathcal{I}_i \neq \varnothing$, $(\mathcal{I}_i, \preceq )$ is a well-ordered set. To see this, note that by construction the sum of the lengths of the excursion intervals in $\mathcal{I}_i$ is bounded above by $\xi_{i}-\xi_{i-1}$ (since $\mathcal{I}_i\neq \varnothing$, $\xi_{i-1}\neq \infty$, so $\xi_{i-1}, \xi_{i}$ cannot be both infinite). If $\xi_{i} > \xi_{i-1}+1$, then except for the last interval excursion in $\mathcal{I}_i$ (the interval with right endpoint $\xi_i$, which possibly has infinite length), the sum of the lengths of the excursions intervals in $\mathcal{I}_i$ is at most $1$. This shows that for any $\varepsilon >0$, there are only finitely many interval excursions $I\in \mathcal{I}_i$ of length $\vert I \vert \geq \varepsilon $. Consider now a non-empty subset $\mathcal{S}\subset \mathcal{I}_i$, and let $I\in \mathcal{S}$. Since there are only finitely many interval excursions $\widetilde{I}\in \mathcal{S}$ with length $\vert \widetilde{I}\vert \geq \vert I\vert$, and since $\preceq$ is a total order on $\mathcal{I}$, $\mathcal{S}$ must have a least element.

We showed that for any $i\geq 1$ for which $\mathcal{I}_i\neq \varnothing$, $(\mathcal{I}_i,\preceq)$ is a well-ordered set. Since $\mathcal{I}_i$ is also a countable set, we can uniquely index its elements by positive integers such that $\mathcal{I}_i=\{I_{i,j}:1\leq j \leq N(i)\}$ for some $N(i)\in \mathbb{N}\cup \{\infty\}$, and
\begin{equation}
I_{i,1}\preceq \ldots \preceq I_{i,N(i)}.
\end{equation}

In order to simplify the notation (the dependence of $N(i)$), in the case when $\mathcal{I}_i\neq \varnothing$ is finite we define $I_{i,j}=\varnothing$ for $j>N(i)$, and we extend the order relation by defining $I\preceq \varnothing$ for any interval $I\in \mathcal{I}$. Also, in the case $\mathcal{I}_i=\varnothing$ for some $i\geq 1$, we define $I_{i,j}=\varnothing$ for all $j\geq 1$.

The above discussion shows that for any $\omega\in \Omega$ we can uniquely write the complement ${Z}^c (\omega)$ of the zero set of $Y(\omega)$ as a disjoint union of excursion intervals
\begin{equation} \label{ordered excursions}
{Z}^c(\omega)=\{t\geq 0:Y_t>0\}=\bigsqcup_{i,j\geq 1} I_{i,j}(\omega),
\end{equation}
with $I_{i,1}(\omega) \preceq I_{i,2}(\omega) \preceq \ldots$ for any $i\geq 1$, and $I_{i,j}^l(\omega) < I_{i^\prime,j^\prime}^l(\omega)$ for any $1\leq i< i^\prime$ and $j,j^\prime \geq 1$ for which $I_{i,j}(\omega),I_{i^\prime,j^\prime}(\omega)\neq\varnothing$.

Finally, note that the above ordering of the excursion intervals of $Y$ induces a corresponding ordering of the excursions of $Y$. Following \cite{Revuz and Yor} (Chap. XII), we define the \emph{excursion of $Y$ straddling $t$} by $e_t(u)=1_{\{u\leq d_t-g_t\}} Y(g_t+u)$ if $d_t-g_t>0$ and $\delta$ otherwise, where $g_t=\sup\{s < t: Y_s=0\}$ is the last visit of $Y$ to zero before time $t$, and $d_t=\inf \{s\ge t: Y_s=0\}$ is the first visit of $Y$ to zero after time $t$. The above shows that the set $E$ of excursions of $Y$ can be partitioned into the disjoint sets $E_i=\{e_{i,j}:j\geq 1\}$, $i\geq 1$, where $e_{i,j}$ represents the excursion of $Y$ straddling the excursion interval $I_{i,j}$, that is $e_{i,j}=e_t$ for $t$ chosen such that $(g_t,d_t)=I_{i,j}$. We set $e_{i,j}\preceq e_{i^\prime,j^\prime}$ iff $I_{i,j}\preceq I_{i^\prime,j^\prime}$, and we define the \emph{length} $R(e_{i,j})$ of the excursion $e_{i,j}$ to be the length of its corresponding excursion interval $I_{i,j}$, that is $R(e_{i,j})=\vert I_{i,j}\vert$.

With this preparation we can give the following.

\begin{definition}\label{iid sign choice}
An \emph{i.i.d. sign choice} for a given a non-negative continuous process $\left(Y_{t}\right) _{t\geq 0}$ on a probability space $(\Omega, \mathcal{F}, P)$ is a sign choice $\left( U_{t}\right) _{t\geq 0}$ for $(Y_t)_{t\ge 0}$ in the sense of Definition \ref{sign choice}, with $U_t(\omega)=1$ for $t\in Z^Y(\omega)=\{s\ge 0: Y_s(\omega)=0\}$, and for which the sequence $(U_{i,j})_{i,j\geq 1}$ defined by
\begin{equation}\label{construction of iid sign choice}
U_{i,j}(\omega)= U_t(\omega),\qquad t\in I_{i,j}(\omega), \, \omega\in\Omega, \, i,j\ge 1,
\end{equation}
is a sequence of i.i.d. random variables on $(\Omega, \mathcal{F}, P)$ which is also independent of $Y$ ($(I_{i,j}(\omega))_{i,j\geq 1}$ are the excursion intervals of $Y(\omega)$ ordered as above).
\end{definition}

\begin{remark} If $U$ is a sign choice for $Y$, from Definition \ref{sign choice} it follows that $U$ is constant on each of the excursion intervals $I_{i,j}$, $i,j\ge 1$, and therefore the random variables $U_{i,j}$, $i,j\ge 1$, in the above definition are well defined. Note that if $Y_t=0$ for some $t\ge 0$, then $U_t Y_t =0$ for any choice of $U_t \in \{-1,1\}$; the condition $U_t=1$ when $Y_t=0$ in the above definition is therefore not essential, and it can be dispensed off. We have chosen this condition in order to insure that $\mathrm{sgn}(U_t Y_t) =U_t$ for all $t\ge 0$, which justifies the name for the process $U$ (recall that we are working with the right continuous choice of $\mathrm{sgn}$ function, $\mathrm{sgn}(0)=1$). Also note that with this normalization, there is a one-to-one correspondence between the set of  i.i.d. sign choices $U$ for a fixed process $Y$ and the set of i.i.d. sequences of random variables $(U_{i,j})_{i,j\geq 1}$ taking the values $\pm 1$, which are also independent of $Y$.

%Also note that the process $U$ in the above definition is not in general adapted to the augmented filtration $\mathbb{F}^Y$ generated by $Y$, but with respect to its natural filtration $\mathbb{F}^U$.
\end{remark}

\begin{example}
The simplest and also interesting example of an i.i.d. sign choice $U$ is the case when the process $Y$ is the reflecting Brownian motion on $[0,\infty)$ and the distribution of the random variables $U_{i,j}$ is $P(U_{i,j}=\pm 1)=\frac12$. In this case, it can be shown (the proof is embedded in the proof of Theorem \ref{theorem 1} below for $a=-b=1$), that the resulting process $U Y$ is a $1$-dimensional Brownian motion.
As it is known, given a $1$-dimensional Brownian motion $B$, its absolute value $\vert B\vert$ has the distribution of a reflecting Brownian motion on $[0,\infty)$. The above shows the converse of this: the i.i.d. sign choice is the tool needed to reconstruct (in distribution, not pathwise) the Brownian motion from the reflecting Brownian motion.
\end{example}

Under mild assumptions, the distribution of an i.i.d. sign choice at a stopping time is the same as the distribution of its defining sequence of random variables, as shown in the following proposition.

\begin{proposition}\label{proposition on stopping times of iid sign choice}
If $U$ is an i.i.d. sign choice for $Y$ and $\tau$ is an a.s. finite $\mathbb{F}^Y$-stopping time with $P(Y_\tau=0)=0$, the random variable $U_\tau$ is independent of $Y$, and has the same distribution as the random variables $(U_{i,j})_{i,j\ge 1}$ appearing in the Definition \ref{iid sign choice} of $U$.
\end{proposition}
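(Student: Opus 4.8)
The plan is to show that, off a null set, $\tau$ falls into a single excursion interval whose (random) index depends on $Y$ alone, and then to exploit the independence of the sign sequence $(U_{i,j})$ from $Y$ to compute the conditional law of $U_\tau$ given $Y$.

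First I would observe that since $\tau$ is a.s.\ finite and $P(Y_\tau=0)=0$, we have $P(\tau\in Z^Y)=0$, so almost surely $\tau$ belongs to exactly one excursion interval $I_{N,M}$ of $Y$, where $(N,M)=(N(\omega),M(\omega))$ is the random pair of indices determined by $\tau\in I_{N,M}$. On this full-measure event $U_\tau=U_{N,M}$, since $U$ is constant on each excursion interval (and the convention $U_\tau=1$ on the null event $\{Y_\tau=0\}$ is irrelevant to the law). The key structural point is that the pair $(N,M)$ is $\sigma(Y)$-measurable: the stopping times $\xi_i$ in \eqref{stopping times tau_m}, the excursion intervals and their lengths and left endpoints, and hence the entire ordering $\preceq$ used to define the $I_{i,j}$, are all measurable functions of the path of $Y$; since $\tau$ is an $\mathbb{F}^Y$-stopping time, each event $\{\tau\in I_{i,j}\}$ lies in $\sigma(Y)$, and therefore so does $(N,M)=\sum_{i,j\ge 1}(i,j)\,1_{\{\tau\in I_{i,j}\}}$.

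Next I would compute $E[f(U_\tau)\mid Y]$ for an arbitrary bounded measurable $f$. Using the full-measure decomposition $f(U_\tau)=\sum_{n,m\ge 1}1_{\{(N,M)=(n,m)\}}f(U_{n,m})$ and the fact that each indicator $1_{\{(N,M)=(n,m)\}}$ is $\sigma(Y)$-measurable, I can pull the indicators out of the conditional expectation to get
\[
E[f(U_\tau)\mid Y]=\sum_{n,m\ge 1}1_{\{(N,M)=(n,m)\}}\,E[f(U_{n,m})\mid Y].
\]
Since the sequence $(U_{i,j})_{i,j\ge 1}$ is independent of $Y$, each term satisfies $E[f(U_{n,m})\mid Y]=E[f(U_{n,m})]$, and since the $U_{i,j}$ are identically distributed this common value equals $E[f(U_{1,1})]$. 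As the indicators sum to $1$ almost surely, I obtain $E[f(U_\tau)\mid Y]=E[f(U_{1,1})]$, a deterministic constant. Because this holds for every bounded measurable $f$, taking expectations of the product $f(U_\tau)g(Y)$ shows that $U_\tau$ is independent of $Y$, and taking $g\equiv 1$ shows $U_\tau$ has the same law as $U_{1,1}$, which is the assertion.

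The main obstacle I anticipate is the measurability claim in the second paragraph, namely that the landing index $(N,M)$ is a function of $Y$ only and carries no information about the sign sequence; this decoupling of the random index from the sign values is precisely what makes the conditioning argument legitimate, and it rests on the explicit, path-measurable construction of the ordered excursion intervals given before Definition \ref{iid sign choice} together with the hypothesis that $\tau$ is adapted to $\mathbb{F}^Y$. A minor technical point is justifying the interchange of the countable sum with the conditional expectation, which follows from dominated convergence since $f$ is bounded and the events $\{(N,M)=(n,m)\}$ partition a set of full measure.
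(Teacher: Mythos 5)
Your proof is correct and follows essentially the same route as the paper: decompose over the excursion interval containing $\tau$, note that the event $\{\tau\in I_{i,j}\}$ is determined by $Y$ alone, and invoke the independence of the sign sequence $(U_{i,j})_{i,j\ge 1}$ from $Y$. The only cosmetic difference is that you phrase the computation via $E[f(U_\tau)\mid Y]$, which delivers the independence of $U_\tau$ from $Y$ and the identification of its law in one stroke, whereas the paper computes $P(U_\tau=1)$ directly and defers the independence claim to ``a similar proof.''
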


\begin{proof}
In the notation of Definition \ref{iid sign choice}, we have
\begin{equation}
\{U_\tau =1\}=\{\tau\in {Z}\} \bigsqcup \Bigg(\bigsqcup_{i,j\geq 1} \{U_{i,j}=1, \tau \in I_{i,j}\}\Bigg).
\end{equation}

Recalling the definition (\ref{stopping times tau_m}) of the $\big(\mathcal{F}^Y_t\big)_{t\ge 0}$-stopping times $\xi_i$,  it is not difficult to see that the event $\{\tau \in I_{i,j}\}$ is an event in $\mathcal{F}^Y_{\xi_i}$: on the set $\{\xi_i\leq t\}$, one ``knows'' the ordering of the excursion intervals $(I_{i,j}) _{j\geq 1}$ in $(\xi_{i-1},\xi_i)$ by time $t$, so $\{\tau \in I_{i,j}\}\cap \{\xi_i\leq t\}\in \mathcal{F}^Y_t$.

Since the random variables $(U_{i,j})_{i,j\ge 1}$ are by definition independent of $\mathcal{F}^Y_\infty$, and $P(\tau\in {Z}^Y)=P(Y_\tau=0)=0$ by hypothesis, we obtain
\begin{eqnarray*}
P(U_\tau =1)&=&\sum_{i,j=1}^\infty P(U_{i,j}=1) P(\tau \in I_{i,j})\\
&=&P(U_{1,1}=1) \sum_{i,j=1}^\infty P(\tau \in I_{i,j})\\
&=&P(U_{1,1}=1) P(\tau <\infty, \tau\in {Z}^c)\\
&=&P(U_{1,1}=1),
\end{eqnarray*}
so $U_\tau$ has the same distribution as the random variables $(U_{i,j})_{i,j\ge 1}$. A similar proof shows that $U_\tau$ is also independent of $\big(\mathcal{F}^Y_t\big)_{t \ge 0}$, concluding the proof.
\end{proof}

In the particular case when $Y$ is a continuous non-negative process for which $P(Y_t=0)=0$ for all $t>0$ and  considering $\tau\equiv t>0$ in the previous proposition, it follows that for all $t>0$ the random variable $U_t$ has the same distribution as the defining sequence $(U_{i,j})_{i,j\ge 1}$ in the Definition \ref{iid sign choice} of $U$. If moreover $P(U_{i,j}=1)=1-P(U_{i,j})=p$, $i,j\ge 1$, we will say that $U$ is an i.i.d. sign choice for $Y$ which \emph{takes the value $1$ with probability $p$}.

We can now prove the following.

\begin{theorem}\label{theorem 1}
Let $a>0>b$ be arbitrarily fixed, and consider the function $\varphi _{a,b}:\mathbb R\rightarrow \mathbb R$ defined by
\begin{equation}\label{definition of varphi}
\varphi _{a_{,}b}\left( x\right) =
\left\{
\begin{tabular}{ll}
$\frac{1}{a}x,$ & $x\geq 0$ \vspace{3pt}\\
$\frac{1}{b}x,$ & $x<0$%
\end{tabular}%
\right. .
\end{equation}

The following assertions are true relative to the stochastic differential equation (\ref{SDE2}).
\begin{enumerate}
\item (Existence) There exists a weak solution $(X,B,\mathbb{F})$, and every such solution is also a $\varphi_{a,b}$-strong solution.

\item (Uniqueness) $\varphi_{a,b}$-strong uniqueness holds.

\item (Construction) A weak solution and a $\varphi_{a,b}$-strong solution can be constructed from a given Brownian motion $(B,\mathbb{F}^B)$ as follows: $\big( \sigma _{a,b}\left( U\right)Y,B,\mathbb{G}\big) $, where $Y$ is the reflecting Brownian motion on $[0,\infty )$ with driving Brownian motion $B$, $U$ is an i.i.d. sign choice for $Y$ taking the value $1$ with probability $\frac{b}{b-a}$, and $\mathbb{G}$ is certain enlargement of the filtration $\mathbb{F}^B$ which satisfies the usual hypothesis (see Appendix \ref{Appendix}).

\item (Representation) Every weak solution $(X,B,\mathbb{F})$ has the representation $X=\sigma_{a,b}(U) Y$, where $Y$ and $U$ are as above.
\end{enumerate}
\end{theorem}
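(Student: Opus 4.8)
The plan is to prove the four assertions by extracting uniqueness and the $\varphi_{a,b}$-strong property from a single It\^o--Tanaka computation, reducing existence and the construction to one skew Brownian motion identity, and deducing the representation from weak uniqueness. Throughout, $X_0=0$ since (\ref{SDE2}) has no initial term.

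First I would treat uniqueness and the claim that every weak solution is $\varphi_{a,b}$-strong. Let $(X,B,\mathbb F)$ be any weak solution of (\ref{SDE2}); since $|\sigma_{a,b}|$ is bounded between positive constants, $X$ is a continuous local martingale with $\langle X\rangle_t=\int_0^t\sigma_{a,b}^2(X_s)\,ds$, and the integrability condition of Definition \ref{varphi strong solution} is automatic. The function $\varphi_{a,b}$ of (\ref{definition of varphi}) is convex, piecewise linear, with $\varphi_{a,b}'(x)\sigma_{a,b}(x)=1$ for $x\neq0$ and a single convex kink at the origin of size $\tfrac1a-\tfrac1b>0$. Applying the It\^o--Tanaka formula (\cite{Revuz and Yor}) and using that $X$ does not spend positive time at $0$ (being a time change of Brownian motion), I obtain
\[
\varphi_{a,b}(X_t)=\int_0^t\varphi_{a,b}'(X_s)\sigma_{a,b}(X_s)\,dB_s+\tfrac12\big(\tfrac1a-\tfrac1b\big)L_t^0(X)=B_t+K_t,
\]
where $K_t=\tfrac12(\tfrac1a-\tfrac1b)L_t^0(X)$ is nondecreasing and increases only on $\{X=0\}=\{\varphi_{a,b}(X)=0\}$. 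Since $\varphi_{a,b}\geq0$, the pair $(\varphi_{a,b}(X),K)$ solves the Skorokhod reflection problem for $B$; by its pathwise uniqueness (\cite{Karatzas-Shreve}), $Y:=\varphi_{a,b}(X)$ is the reflecting Brownian motion on $[0,\infty)$ with driving Brownian motion $B$. In particular $\varphi_{a,b}(X)$ is $\mathbb F^B$-adapted, so $X$ is $\varphi_{a,b}$-strong, and as $Y$ is determined pathwise by $B$, any two weak solutions relative to the same $B$ satisfy $\varphi_{a,b}(X)=\varphi_{a,b}(\widetilde X)$. This gives (2) and the second half of (1).

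Next I would carry out the construction (3). With $B$, the reflecting Brownian motion $Y=B+L$, and the i.i.d.\ sign choice $U$ taking the value $1$ with probability $p=\frac{b}{b-a}\in(0,1)$, set $X=\sigma_{a,b}(U)Y$. Writing $\sigma_{a,b}(U)=\tfrac{a+b}2+\tfrac{a-b}2U$ shows $X=\tfrac{a+b}2Y+\tfrac{a-b}2\,UY$ is continuous, and one checks $\sigma_{a,b}(X_s)=\sigma_{a,b}(U_s)$ for all $s$. In the filtration $\mathbb G$ of Appendix \ref{Appendix}, in which $B$ remains a Brownian motion and $U$ is adapted, the key identity to establish is the semimartingale decomposition
\[
U_tY_t=\int_0^tU_s\,dB_s+(2p-1)L_t ,
\]
i.e.\ that $UY$ is a skew Brownian motion of parameter $p$. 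Granting this, substituting $Y=B+L$ and using $2p-1=\tfrac{a+b}{b-a}$ makes the two $L$-terms cancel, leaving $X_t=\int_0^t\big(\tfrac{a+b}2+\tfrac{a-b}2U_s\big)dB_s=\int_0^t\sigma_{a,b}(U_s)\,dB_s=\int_0^t\sigma_{a,b}(X_s)\,dB_s$, so $(X,B,\mathbb G)$ is a weak solution; this yields (3) and existence in (1).

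The hardest step is the skew Brownian motion identity just displayed. I would prove that $M:=UY-(2p-1)L$ is a $\mathbb G$-local martingale and then identify it with $\int_0^\cdot U_s\,dB_s$. On every excursion interval of $Y$ one has $dL=0$ and $d(UY)=U\,dB$, so $M$ carries the increments of $\int U\,dB$ there, and the whole difficulty is the singular behaviour on the zero set $Z^Y$: the local-time push must have expected sign bias exactly $(2p-1)\,dL$, which is precisely the balance $\mathbb E[\sigma_{a,b}(U_{i,j})]=ap+b(1-p)=0$ forced by $p=\frac{b}{b-a}$. To make this rigorous I would argue excursion by excursion, using the stopping times $\xi_i$ of (\ref{stopping times tau_m}) and the fact, established before Definition \ref{iid sign choice}, that only finitely many excursions in each $(\xi_{i-1},\xi_i)$ exceed a given length $\varepsilon>0$: freezing the signs of these finitely many long excursions and applying optional stopping together with Proposition \ref{proposition on stopping times of iid sign choice} (so that a sign read at a stopping time is independent of $Y$ with the prescribed mean-zero-weighted law) shows the frozen process is a martingale up to an error controlled by the total length of the discarded short excursions; letting $\varepsilon\downarrow0$ and using $\mathrm{Leb}(Z^Y)=0$ gives that $M$ is a local martingale. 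Since $U^2\equiv1$ and $Z^Y$ is Lebesgue-null, the brackets $\langle M\rangle$, $\langle\int U\,dB\rangle$ and their cross bracket all equal $t$, so $M-\int U\,dB$ is a continuous local martingale of zero quadratic variation, hence identically zero, establishing the identity. Finally, for the representation (4), given any weak solution $X$ the first step identifies $Y=\varphi_{a,b}(X)$ with the reflecting Brownian motion driven by $B$, and $U=\mathrm{sgn}(X)$ satisfies $X=\sigma_{a,b}(U)Y$ with $UY$ continuous, so $U$ is a sign choice for $Y$. The ordered excursion signs $(U_{i,j})$ together with $Y$ form a fixed measurable functional of the path $X$; since $\sigma_{a,b}$ has no zeros and $\sigma_{a,b}^{-2}$ is locally integrable, Engelbert and Schmidt's criterion (\cite{Engelbert-Schmidt '85}) gives uniqueness in the sense of probability law, so the law of $(Y,(U_{i,j}))$ coincides with that of the solution built in (3), where the $(U_{i,j})$ are i.i.d.\ with $P(U_{i,j}=1)=p$ and independent of $Y$ by construction. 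Hence $U$ is an i.i.d.\ sign choice taking the value $1$ with probability $\frac{b}{b-a}$, which is the asserted representation.
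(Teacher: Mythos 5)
Your overall architecture is sound, and assertions (1)--(2) are proved exactly as in the paper: the It\^{o}--Tanaka computation giving $\varphi_{a,b}(X)=B+\tfrac12\left(\tfrac1a-\tfrac1b\right)L^0(X)$ followed by the identification of $\varphi_{a,b}(X)$ as the reflecting Brownian motion driven by $B$ is the paper's argument, merely phrased via Skorokhod uniqueness rather than via an explicit computation of $L^0(\varphi_{a,b}(X))$. For assertion (3) you and the paper are doing the same computation in different clothing: your identity $U_tY_t=\int_0^tU_s\,dB_s+(2p-1)L_t$ is algebraically equivalent, via $\sigma_{a,b}(U)=\tfrac{a+b}2+\tfrac{a-b}2U$, to the identity $\int_0^t\sigma_{a,b}(X_s)\,dB_s=\sigma_{a,b}(U_t)Y_t$ that the paper establishes directly. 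The one place your write-up falls materially short of a proof is precisely this identity: ``optional stopping \dots\ up to an error controlled by the total length of the discarded short excursions'' is the right heuristic, but turning it into a proof requires the quantitative downcrossing estimates the paper supplies around \eqref{aux2} --- Wald's second identity applied to $\varepsilon\sum_{i\le D_t(\varepsilon)}\sigma_{a,b}(U_{\sigma_i})$, whose second moment $-ab\,\varepsilon^2 ED_t(\varepsilon)$ vanishes because $\varepsilon D_t(\varepsilon)\to L_t^0(Y)$ in $L^2$, together with the bound $\varepsilon^2E(D_t(\varepsilon)+1)$ on the time spent below level $\varepsilon$. Your concluding step ($\langle M-\int U\,dB\rangle\equiv0$) also quietly assumes $\langle M,\int U\,dB\rangle_t=t$, which needs justification since the semimartingale decomposition of $UY$ is exactly what is being established; and the facts that $B$ remains a Brownian motion under $\mathbb G$ and that $\int U\,dB$ is well defined are not free --- the paper derives them from the strong Markov property of the process built in Appendix \ref{Appendix}.

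Where you genuinely diverge --- and gain in economy --- is assertion (4). The paper proves that the excursion signs $(U_{i,j})$ of an arbitrary weak solution are i.i.d.\ and independent of $Y$ by a direct excursion computation: it time-changes $X$ into a Brownian motion $W$, shows that the counts of suitably scaled positive and negative excursions exceeding a given length are independent Poisson processes with intensities in the ratio $|b|:a$, identifies the signs with the jumps of the resulting compound Poisson process, and then invokes the It\^{o}--McKean/Blumenthal decomposition of the skew Brownian motion $X/|\sigma_{a,b}(X)|$ to get independence from $Y$. You instead observe that $(Y,(U_{i,j}))$ is a fixed measurable functional of the path of $X$, that uniqueness in the sense of probability law holds by the Engelbert--Schmidt criterion, and that the solution constructed in (3) has the desired joint law by construction; hence so does every weak solution. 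This transfer argument is valid (the measurability of the excursion ordering is established in the paper before Definition \ref{iid sign choice}) and is substantially shorter, at the cost of being entirely parasitic on (3) and of giving no information about how the excursion structure of a solution actually produces the bias $p=\frac{b}{b-a}$, which the paper's Poisson computation exhibits explicitly.
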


\begin{proof}
That (\ref{SDE2}) has a weak solution follows immediately from classical results (the set $I(\sigma_{a,b})$ of non-local integrability of $\sigma_{a,b}^{-2}$ is in this case empty, and coincides with the set $Z(\sigma_{a,b})$ of zeroes of $\sigma_{a,b}$, so by the results of Engelbert and Schmidt \cite{Engelbert-Schmidt} there exists a weak solution of (\ref{SDE2}), which is also unique in the sense of probability law).

If $(X,B,\mathbb{F})$ is a weak solution of (\ref{SDE2}), its quadratic variation process is given by
\[
\langle X\rangle _{t}=\int_{0}^{t}\sigma _{a,b}^{2}\left( X_{s}\right)
ds=\int_{0}^{t}\left (a^{2}1_{[0,\infty )}\left( X_{s}\right) +b^{2}1_{(-\infty
,0)}\left( X_{s}\right)\right) ds.
\]%

In particular, the above shows that $d\langle X\rangle _{t}$ is absolutely continuous with
respect to the Lebesgue measure, so by Corollary VI.I.6 of \cite{Revuz and Yor}, the Lebesgue measure of the time spent by $X$ at the origin is zero. Using Corollary VI.1.9 of \cite{Revuz and Yor} and the fact that $X$ is a local martingale, it follows that $L_t^0(X)=L_t^0(-X)$, where $L_{t}^{0}\left( X\right) =\lim_{\varepsilon \searrow 0}\frac{1}{\varepsilon }\int_{0}^{t}1_{\left[0 ,\varepsilon \right)}\left( X_{s}\right) d\langle X\rangle _{s}$ denotes the semimartingale local time of $X$ at the origin.

Applying the Tanaka-It\^{o} formula to
the convex function $\varphi _{a,b}$ and to the process $X$ we obtain%
\begin{eqnarray}
\varphi _{a,b}\left( X_{t}\right) &=&\int_{0}^{t}\varphi _{a,b}^{{\prime
}}\left( X_{s}\right) dX_{s}+\frac{1}{2}\left( \varphi _{a,b}^{\prime
}\left( 0+\right) -\varphi _{a,b}^{\prime }\left( 0-\right) \right)
L_{t}^{0}\left( X\right) \label{aux1}\\
&=&B_{t}+\frac{1}{2}\left( \frac{1}{a}-\frac{1}{b}%
\right) L_{t}^{0}\left( X\right) ,  \notag
\end{eqnarray}%
which shows that the process $Y=\varphi _{a,b}\left( X\right) $ is a semimartingale with quadratic variation $\langle Y\rangle_t=t$. Note
that $1_{\left[0,\varepsilon \right) }\left( Y_{s}\right)
=1_{[0,a\varepsilon )}\left( X_{s}\right) +1_{(b\varepsilon ,0)}\left(
X_{s}\right) $ for any $\varepsilon >0$ and $t\geq 0$.  It follows that the semimartingale local time of $Y$ at the origin is given by
\begin{eqnarray*}
L_{t}^{0}\left( Y\right) &=&\lim_{\varepsilon \searrow 0}\frac{1}{%
\varepsilon }\int_{0}^{t}1_{\left[ 0 ,\varepsilon \right)
}\left( Y_{s}\right) d\langle Y\rangle _{s} \\
&=&\lim_{\varepsilon \searrow 0}\frac{1}{\varepsilon }\int_{0}^{t}1_{[0,a%
\varepsilon )}\left( X_{s}\right) ds+\lim_{\varepsilon \searrow 0}\frac{1}{%
2\varepsilon }\int_{0}^{t}1_{(b\varepsilon ,0)}\left( X_{s}\right) ds \\
&=&\lim_{\varepsilon \searrow 0}\frac{1}{\varepsilon }\int_{0}^{t}1_{[0,a%
\varepsilon )}\left( X_{s}\right) \frac{1}{a^{2}}d\langle X\rangle
_{s}+\lim_{\varepsilon \searrow 0}\frac{1}{\varepsilon }\int_{0}^{t}1_{(b%
\varepsilon ,0)}\left( X_{s}\right) \frac{1}{b^{2}}d\langle X\rangle _{s} \\
&=&\frac{1}{a}L_t^0(X)-\frac{1}{b} L_{t}^{0}\left( -X\right)\\
&=&\left( \frac{1}{a}-\frac{1}{b}\right) L_{t}^{0}\left( X\right)
\end{eqnarray*}

From (\ref{aux1}) it follows that the process $Y=\varphi _{a,b}\left(
X\right) $ satisfies the SDE%
\[
Y_{t}=B_{t}+\frac12L_{t}^{0}\left( Y\right) ,\qquad t\geq 0,
\]%
and therefore $Y$ is the reflecting Brownian motion on $%
[0,\infty )$ with driving Brownian motion $B$. In particular, the
process $Y=\varphi _{a,b}\left( X\right) $ is adapted to the filtration $\mathbb{F}^{B}$ of the Brownian motion $B$ and it is
pathwise unique. This proves the last part of the first assertion of the theorem and the second one.

To prove the third claim of the theorem, consider a $1$-dimensional $(B,\mathbb{F}^B)$ starting at the origin on the canonical probability space $(C[0,\infty),\mathcal{B}(C[0,\infty)),P)$ and let $Y$ be the reflecting Brownian motion on $[0,\infty )$ with driving Brownian motion $B$. Consider the process $X$ defined by $X_{t}=\sigma _{a,b}\left( U_{t}\right) Y_{t}$, $t\ge 0$, where $U$ is an i.i.d. sign choice for $Y$ taking the value $1$ with probability $\frac{b}{b-a}$, and $\mathbb{G}$ is the corresponding filtration constructed in the Appendix \ref{Appendix}, under which $(UY,\mathbb{G})$ is a skew Brownian motion with parameter $\alpha=\frac{b}{b-a}$.

We will now show that $\left( X,B,\mathbb{G}\right)$ is a weak solution and also a $\varphi _{a,b}$-strong solution of (\ref{SDE2}). First, we will show that $B$ remains a Brownian motion under the enlarged filtration $\mathbb{G}$. To see this, note that by the construction (see Lemma \ref{equivalence of excursion numberings} and the concluding remarks in Appendix \ref{Appendix} for the details), the process $X^\alpha$ defined by $X^\alpha_t=U_t Y_t$, $t\ge 0$, is a skew Brownian motion with parameter $\alpha=\frac{b}{b-a}$, and moreover $X^\alpha$ is a $\mathbb{G}$-adapted strong Markov process. Since $Y=\vert X^\alpha\vert$, the process $Y$ is also a $\mathbb{G}$-adapted strong Markov process. The representation $$L_{t+s}^0 (Y)=L_s^0(Y)+\lim_{\varepsilon\rightarrow 0} \frac{1}{\varepsilon}\int_0^{t} 1_{[0,\varepsilon)}(Y_{s+u}) du=L_s^0(Y)+L_t^0(Y\circ\theta_s), \qquad t,s,\ge 0,$$  shows that $L^0(Y)$ is also a $\mathbb{G}$-adapted strong Markov process, and therefore so is $B=Y-\frac12L^0(Y)$. We obtain $E(B_{t+s}\vert \mathcal{G}_s)=E^{B_s} B_t=B_s$, since the distribution of $B$ starting at $x\in\mathbb R$ is normal with mean $x$ (recall that $(B,\mathbb{F}^B)$ is a Brownian motion).

%so $B$ is also $\mathbb{G}$-adapted, and by the Markov property of $Y$ we obtain
%\begin{eqnarray*}
%E\left(B_{t+s} \vert \mathcal{G}_s\right)&=&E \left(Y_{t+s} -\frac12 L_{t+s}^0(Y) \vert \mathcal{G}_s\right)\\
%&=& E \left((Y_{t}-\frac12 L_{t}^0(Y))\circ \theta_s -\frac12 L_s^0(Y) \vert \mathcal{G}_s\right)\\
%&=& E \left((Y_{t}-\frac12 L_{t}^0(Y))\circ \theta_s \vert \mathcal{G}_s\right)-\frac12 L_s^0(Y) \\
%&=& E^{Y_s} \left(Y_{t}-\frac12 L_{t}^0(Y)\right)-\frac12 L_s^0(Y) \\
%&=& E^{Y_s} \left(B_t\right)-\frac12 L_s^0(Y) \\
%&=& Y_s -\frac12 L_s^0(Y) \\
%&=&B_s
%\end{eqnarray*}
%for any $t,s\ge 0$, where we denoted by $E$ and $E^x$ the expectation with respect to $P$, respectively $P^x$ ($P^x$ are the corresponding probability measures associated with the strong Markov process $Y$), and $\theta_s$ denotes the usual shift operators of paths, i.e. $(\theta_s \omega)(t)=\omega(t+s)$.

The above shows that $(B,\mathbb{G})$ is a martingale, and since $B$ is also continuous and has quadratic variation at time $t\ge 0$ equal to $t$ (since $\left(B,\mathbb{F}^B\right)$ is a Brownian motion), by L\'{e}vy's characterization of Brownian motion it follows that $(B,\mathbb{G})$ is a Brownian motion.

Next, note that by the definition (\ref{definition of varphi}) of $\varphi_{a,b}$ we have  $\varphi_{a,b}(X_t)=Y_t \varphi_{a,b}(\sigma_{a,b}(U_t))=Y_t$, and since $Y$ is adapted to the filtration $\mathbb{F}^B$ generated by $B$, so is $\varphi_{a,b}(X)$. In order to prove the claim it remains therefore to show that $X$ verifies the SDE (\ref{SDE2}).

Recalling the construction (\ref{construction of iid sign choice}) of the i.i.d. sign choice $U$ for $Y$, we see that $U_t=1$ when $Y_t=0$, so $\mathrm{sgn}\left( X_{t}\right)= \mathrm{sgn}\left( \sigma _{a,b}\left( U_{t}\right) Y_{t} \right) =U_{t}$, and therefore $\sigma _{a,b}\left( X_{t}\right) =\sigma
_{a,b}\left( U_{t}\right)$ for any $t\geq 0$. Since the process $Y$ (hence $X$) spends zero Lebesgue time at the origin, we have
almost surely%
\begin{equation*}
\int_{0}^{t}\sigma _{a,b}\left( X_{s}\right) dB_{s}=\int_{0}^{t}\sigma
_{a,b}\left( U_{s}\right) 1_{\mathbb{R}^{\ast }}\left( X_{s}\right) dB_{s},\qquad t\ge 0.
\end{equation*}

Since $Y$ is reflecting Brownian motion with driving Brownian motion $B$, we have
\begin{eqnarray*}
\int_{0}^{t}\sigma _{a,b}\left( X_{s}\right) dB_{s} &=&\int_{0}^{t}\sigma
_{a,b}\left( U_{s}\right) 1_{\mathbb{R}^{\ast }}\left( X_{s}\right)
dY_{s}-\frac12\int_{0}^{t}\sigma _{a,b}\left( U_{s}\right) 1_{\mathbb{R}^{\ast }}\left(
X_{s}\right) dL_{s}^{0}\left( Y\right) \\
&=&\int_{0}^{t}\sigma _{a,b}\left( U_{s}\right) 1_{\mathbb{R}^{\ast }}\left(
X_{s}\right) dY_{s},
\end{eqnarray*}%
where the last equality follows from the fact that the semimartingale local time $L^{0}\left(
Y\right) $ of $Y$ at the origin increases only when $Y$ (hence $X$) is at the origin.

\label{pagina de start referinta}
For $\varepsilon >0$ arbitrarily fixed, consider the $\mathbb{F}^Y$-stopping times $\tau
_{i}$ and $\sigma _{i}$ defined inductively by $\tau _{0}=0$,%
\[
\sigma _{i}=\inf \left\{ s\geq \tau _{i-1}:Y_{s}=\varepsilon \right\} \quad
\text{and\quad }\tau _{i}=\inf \left\{ s\geq \sigma _{i}:Y_{s}=0\right\} ,\qquad
i\geq 1.
\]

Denoting by $D_{t}\left( \varepsilon \right) =\sup \left\{ i\geq 0:\tau
_{i}\leq t \right\} $ the number of downcrossings of the interval $\left[
0,\varepsilon \right] $ by the process $(Y_{s})_{0\leq s\leq t}$, we obtain
\begin{eqnarray*}
&&\int_{0}^{t}\sigma _{a,b}\left( X_{s}\right) dB_{s} =\\
&=&\int_{0}^{t}\sigma
_{a,b}\left( U_{s}\right) 1_{\mathbb{R}^{\ast }}( X_{s}) dY_{s} \\
&=&\sum_{i\geq 1}\int_{\sigma _{i}\wedge t}^{\tau _{i}\wedge t}\sigma
_{a,b}\left( U_{s}\right) 1_{\mathbb{R}^{\ast }}( X_{s})
dY_{s}+\sum_{i\geq 1}\int_{\tau _{i-1}\wedge t}^{\sigma _{i}\wedge t}\sigma
_{a,b}\left( U_{s}\right) 1_{\mathbb{R}^{\ast}}( X_{s}) dY_{s} \\
&=&\sum_{i\geq 1}\sigma _{a,b}\left( U_{\sigma _{i}\wedge t}\right) \left(
Y_{\tau _{i}\wedge t}-Y_{\sigma _{i}\wedge t}\right) +\sum_{i\geq
1}\int_{\tau _{i-1}\wedge t}^{\sigma _{i}\wedge t}\sigma _{a,b}\left(
U_{s}\right) 1_{\mathbb{R}^{\ast}}( X_{s}) dY_{s} \\
&=&-\varepsilon \sum_{i=1}^{D_{t}\left( \varepsilon \right) }\sigma
_{a,b}\left( U_{\sigma _{i}}\right) +\sigma _{a,b}\left( U_{t}\right) \left(
Y_{t}-\varepsilon \right) \sum_{i\geq 1}1_{[\sigma _{i},\tau _{i})}\left(
t\right)\\
&&+\sum_{i\geq 1}\int_{\tau _{i-1}\wedge t}^{\sigma _{i}\wedge
t}\sigma _{a,b}\left( U_{s}\right) 1_{\mathbb{R}^{\ast }}\left( X_{s}\right) dY_{s},
\end{eqnarray*}%
and therefore%
\begin{align}
\int_0^t \sigma_{a,b} \left( X_{s}\right) d B_{s}-\sigma _{a,b}\left(
U_{t}\right) Y_{t} =&-\varepsilon \sum_{i=1}^{D_{t}\left( \varepsilon
\right) }\sigma _{a,b}\left( U_{\sigma _{i}}\right) \label{aux2} \\
&-\sigma _{a,b}\left(
U_{t}\right) Y_{t}\sum_{i\geq 1}1_{[\tau _{i-1},\sigma _{i})}\left( t\right)& \nonumber\\
&-\varepsilon \sigma \,_{a,b}\left( U_{t}\right) \sum_{i\geq 1}1_{[\sigma
_{i},\tau _{i})}\left( t\right)& \nonumber\\
&+\sum_{i\geq 1}\int_{\tau _{i-1}\wedge
t}^{\sigma _{i}\wedge t}\sigma _{a,b}\left( U_{s}\right) 1_{\mathbb{R}^{\ast }}\left(
X_{s}\right) dY_{s}.&  \nonumber
\end{align}

To prove the claim, we will show that all the terms on the right of the above
equality converge to zero in $L^{2}$ as $\varepsilon \searrow 0$.

By construction, $\sigma_i$ are a.s. finite $(\mathcal{F}^Y_t)$-stopping times, with $Y_{\sigma_i}=\varepsilon\neq0$ a.s. for any $i\geq1$, so they satisfy the hypotheses of Proposition \ref{proposition on stopping times of iid sign choice}. It follows that $\left( U_{\sigma _{i}}\right) _{i\geq 1}$ are identically distributed random variables, taking the values $\pm 1$ with probability $P\left( U_{\sigma _{i}}=1\right) =1-P\left( U_{\sigma _{i}}=-1\right) =\frac{b%
}{b-a}$ for any $i\geq 1$.

Also note that by the construction of the stopping times $\sigma_i$, the process $Y$ visits the origin in each of the random time intervals $(\sigma_i, \sigma_{i+1})$, hence for each $1\leq i <j$, $\sigma_i$ and $\sigma_j$ belong to different excursion intervals of $Y$ away from zero. Recalling the definition (\ref{construction of iid sign choice}) of the i.i.d. sign choice $U$ for $Y$, and using the fact that the random variables $(U_{i,j})_{i,j\geq1}$ defining $U$ are i.i.d. and also independent of $\mathcal{F}^ Y$, for any $u,v\in\{\pm1\}$ and any $1\leq i < j$ we obtain
\begin{eqnarray*}
P(U_{\sigma_i}=u, U_{\sigma_j}=v)&=&\sum_{(m,n)\neq (p,q)}P\left(U_{m,n}=u,U_{p,q}=v, \sigma_i\in I_{m,n}, \sigma_j\in I_{p,q}\right)\\
&=&\sum_{(m,n)\neq (p,q)}P(U_{m,n}=u)P(U_{p,q}=v) P(\sigma_i\in I_{m,n}, \sigma_j\in I_{p,q})\\
&=& P(U_{1,1}=u)P(U_{1,1}=v) \sum_{(m,n)\neq (p,q)}P(\sigma_i\in I_{m,n}, \sigma_j\in I_{p,q})\\
&=& P(U_{\sigma_i}=u)P(U_{\sigma_j}=v) P((\sigma_i, \sigma_j) \cap {Z}^Y \neq \varnothing)\\
&=& P(U_{\sigma_i}=u)P(U_{\sigma_j}=v).
\end{eqnarray*}

This shows that for any $1\leq i<j$, the random variables $U_{\sigma_i}$ and $U_{\sigma_j}$ are independent. A similar proof shows that $(U_{\sigma_i})_{i\geq 1}$ forms an independent sequence of random variables, hence $(U_{\sigma_i})_{i\geq 1}$ is an i.i.d. sequence of random variables.

From the definition of the function $\sigma
_{a,b} $ it follows that $\left( \sigma _{a,b}\left( U_{\sigma _{i}}\right)
\right) _{i\geq 1}$ are independent random variables with mean $0$ and
variance $-ab$, so using Wald's second identity we obtain%
\[
E\left( -\varepsilon \sum_{i=1}^{D_{t}\left( \varepsilon \right) }\sigma
_{a,b}\left( U_{\sigma _{i}}\right) \right) ^{2}=\varepsilon ^{2}E\left(
\sigma _{a,b}^2\left( U_{\sigma _{1}}\right) \right) ED_{t}\left( \varepsilon
\right) =-ab\varepsilon ^{2}ED_{t}\left( \varepsilon \right) \rightarrow 0
\]%
as $\varepsilon \searrow 0$, since by L\'evy's characterization of the local
time we have $\varepsilon D_{t}\left( \varepsilon \right) \rightarrow
L_{t}^{0}\left( Y\right) $ a.s. and also in $L^{2}$ (see \cite%
{Karatzas-Shreve}, p. 416). This shows the $L^2$ convergence to zero as $%
\varepsilon \searrow 0$ of the first term on the right of (\ref{aux2}).

Next, note that if $t\in \lbrack \tau _{i-1},\sigma _{i})$ for some $i\geq 1$%
, by construction we have $Y_{t}\in \lbrack 0,\varepsilon )$, so we obtain%
\begin{eqnarray*}
E\left( \sigma _{a,b}\left( U_{t}\right) Y_{t}\sum_{i\geq 1}1_{[\tau
_{i-1},\sigma _{i})}\left( t\right) \right) ^{2}&\leq & \max \left\{
a^2,b^2 \right\} \varepsilon ^{2}E\sum_{i\geq 1}1_{[\tau _{i-1},\sigma
_{i})}\left( t\right) \\
&\leq & \max \left\{ a^2,b^2\right\} \varepsilon
^{2}\\
&\rightarrow& 0
\end{eqnarray*}
as $\varepsilon \searrow 0$. This proves the $L^2$ convergence to zero as $%
\varepsilon \searrow 0$ of the second term on the right of (\ref{aux2}). For
the third term the proof being similar, we omit it.

Using again Wald's second identity and the fact
that $\sigma _{i}-\tau _{i-1}$, $i=1,2,\ldots $ are
i.i.d. random variables (see Theorem 2.6.16 in \cite{Karatzas-Shreve}), with
mean $E\left( \sigma _{1}-\tau _{0}\right) = E\sigma _{1}=\varepsilon ^{2}$, we obtain
\begin{eqnarray*}
E\left( \sum_{i\geq 1}\int_{\tau _{i-1}\wedge t}^{\sigma _{i}\wedge t}\sigma
_{a,b}\left( U_{s}\right) 1_{\mathbb{R}^{\ast }}\left( X_{s}\right) dY_{s}\right)
^{2} &\leq&\max \left\{ a^{2},b^{2}\right\} E\int_{0}^{t}\sum_{i\geq
1}1_{[\tau _{i-1},\sigma _{i}]}\left( s\right) d\langle Y\rangle _{s} \\
&\leq &\max \left\{ a^{2},b^{2}\right\} E\sum_{i=1}^{D_{t}\left( \varepsilon
\right) +1}\left( \sigma _{i}-\tau _{i-1}\right) \\
&=&\max \left\{ a^{2},b^{2}\right\} E\left( \sigma _{1}-\tau _{0}\right)
E\left( D_{t}\left( \varepsilon \right) +1\right) \\
&=&\max \left\{ a^{2},b^{2}\right\} \varepsilon ^{2}E\left( D_{t}\left(
\varepsilon \right) +1\right) \\
&\rightarrow &0,
\end{eqnarray*}%
and therefore the last term on the right side of (\ref{aux2}) also converges
to zero in $L^2$ as $\varepsilon \searrow 0$.

We have shown that all the terms on the right side of (\ref{aux2}) converge
to zero as $\varepsilon \searrow 0$. Passing to the limit with $\varepsilon
\searrow 0$ in (\ref{aux2}) we obtain $\int_{0}^{t}\sigma _{a,b}\left(
X_{s}\right) dB_{s}=\sigma _{a,b}\left( U_{t}\right) Y_{t}=X_{t}$,
concluding the proof of the third claim of the theorem.\label{pagina de sfarsit referinta}

To prove the last claim of the theorem, note that if $\left( X,B,\mathbb{F}\right)$ is a weak solution of (\ref{SDE2}), then by the first part of the proof it follows that $Y=\varphi_{a,b}(X)$ is the reflecting Brownian motion on $[0,\infty)$ with driving Brownian motion $B$.

From the definition of the functions $\sigma_{a,b}$ and $\varphi_{a,b}$ it follows that $X$ can be written in the form
\begin{equation}\label{representation no 1}
X_t=\sigma_{a,b}(U_t)Y_t, \qquad t\geq 0,
\end{equation}
where $U_t=\mathrm{sgn}(X_t)=1_{[0,\infty)}(X_t)-1_{(-\infty,0)}(X_t)$.

The continuity of the processes $X$ and $Y$ (and the discontinuity at the origin of $\sigma_{a,b}$) shows that $U$ can only change signs when $Y=0$, so $U Y$ is a continuous process, and therefore $U$ is a sign choice for $Y$ in the sense of Definition \ref{sign choice}.

To show that $U$ is an i.i.d. sign choice for $Y$, according to Definition \ref{iid sign choice} we have to show that the restriction of $U$ to the excursion intervals of $Y$ forms an i.i.d. sequence, also independent of $Y$. To do this, consider the ordered excursion intervals $(I_{i,j})_{i,j\geq 1}$ of $Y$ away from zero, as defined before Definition \ref{iid sign choice}, and define the random variables $(U_{i,j})_{i,j\geq 1}$ by $U_{i,j}(\omega)=U_t(\omega)$ for $t\in I_{i,j}(\omega)$; $U_{i,j}$ is thus the sign of $X$ during the excursion interval $I_{i,j}$ of $Y$ (note that by (\ref{representation no 1}) the excursion intervals of $X$ and $Y$ are the same).

Since $Y$ is reflecting Brownian motion starting at the origin, we have $I_{i,j}\ne \varnothing$ a.s. for any $i,j\geq 1$ (recall that when ordering the excursion intervals of $Y$, if $\mathcal{I}_i\neq \varnothing $ is a finite set, we defined $I_{i,j}=\varnothing$ for $j>N(i)$; for the reflecting Brownian motion this is not the case, since starting at $0$ it visits the origin infinitely often, at times arbitrarily close to $0$). This, together with the fact that $X$ has constant sign during each excursion interval $I_{i,j}$, shows that the random variables $U_{i,j}$ introduced above are well defined up to a set of zero probability, on which we consider $U_{i,j}=1$.

Since $X$ is a weak solution of (\ref{SDE2}), by L\'{e}vy's characterization of Brownian motion it follows that the process $W$ defined by $W_t=X_{\alpha_t}$, $t\ge0$, is a Brownian motion, where the time change $\alpha_t$ is the inverse of the continuous increasing process $A$ given by $A_t=\langle X\rangle _t$, $t\ge 0$. Undoing the time change, $X$ may be written in the form
\begin{equation}\label{representation no 2}
X_t=W_{A_t}, \qquad t\geq 0,
\end{equation}
where $W$ is a $1$-dimensional Brownian motion starting at $0$ and $A_t=\int_0^t \sigma_{a,b}^2(X_s) ds$.

Comparing the representations (\ref{representation no 1}) and (\ref{representation no 2}) of $X$, it can be seen that the excursions of $Y$, space-scaled by $\sigma_{a,b}(U)$, coincide with the excursions of $W$, time-scaled by $\sigma_{a,b}^2 (X)$. More precisely, denoting by $e^Y_t$ the excursion of $Y$ straddling $t$, and by $e^W_{A_t}$ the excursion of $W$ straddling $A_t=\int_0^t \sigma_{a,b}^2 (X_s)ds$, we have
\begin{equation}\label{correspondence between excursions}
e^Y_t(u)=\frac{1}{\sigma_{a,b}(X_t)} e^W_{A_t} (\sigma_{a,b}^2(X_t) u)=\left \vert s_{a,b}\left(e^W_{A_t}\right) (u)\right\vert,
\end{equation}
where $s_{a,b}$ is the scaling map defined on the space of excursions by
\begin{equation}\label{scaling map s}
s_{a,b}(e)(u)=\frac{1}{\vert \sigma_{a,b}(e)\vert} e\left(\sigma_{a,b}^2(e) u\right), \qquad u\geq 0,
\end{equation}
and by an abuse of notation we write $\sigma_{a,b}(e)=a$ if $e$ is a positive excursion, and $\sigma_{a,b}(e)=b$ if $e$ is a negative excursion.

%Set $\tau^Y_t=\inf\{s>t: Y_s=0\}$ and $\tau^Y_{t-}=\sup\{s\leq t: Y_s=0\}$, and define $\tau^W_t$ and $\tau^W_{t-}$ similarly.  The excursion of $Y$ straddling $t$ is then $e_t^Y(s)=Y(\tau^Y_{t-}+s)$ for $0\leq s< \tau^Y_t-\tau^Y_{t-}$ and $0$ otherwise (for $\tau_t-\tau_{t-}=0$ we set $e_t=\delta$), and similarly for $W$.
\begin{remark}
In the discussion preceding Definition \ref{iid sign choice}, we partitioned the excursion intervals and the corresponding excursions of $Y$ into the disjoint sets $\mathcal{I}_i=\{I_{i,j}: I_{i,j}\subset (\xi_{i-1},\xi_i)\}$, respectively $E_i=\{e^Y_t: \xi_{i-1}\leq t <\xi_i\}$, by using the stopping times defined by $\xi_0=0$ and $\xi_{i+1}=\inf \{t\geq \xi_i +1: Y_t=0\}$.

To partition the excursions of $W$ into disjoint sets $\widetilde{E}_i$ such that under (\ref{correspondence between excursions}) the correspondence of the excursions of $Y$ in $E_i$ and of $W$ in $\widetilde{E}_i=\{e^W_s: \widetilde{\xi}_{i-1}\leq s< \widetilde{\xi}_{i}\}$ is one-to-one, we use the same construction, but with the following modification. Instead of the stopping times $\xi_i$ we use the stopping times $\widetilde{\xi}_i$ of $W$ corresponding to $\xi_i$ under the time change $A$, that is $\widetilde{\xi}_0=0$ and $\widetilde{\xi}_{i+1}=\inf \{t\geq A_{\widetilde{\xi}_i +1}: W_t=0\}$. With this choice, there is a one-to-one correspondence under (\ref{correspondence between excursions}) between the excursions of $Y$ in $E_i$ and the excursions of $W$ in $\widetilde{E}_i$.
\end{remark}

Together with (\ref{correspondence between excursions})--(\ref{scaling map s}), the above shows that the excursions of $Y$ in $E_i$ are obtained from the excursions of $W$ in $\widetilde{E}_i$ by scaling the positive excursions by $a$ and the negative excursions by $b$, and by taking their absolute value. If $\{\widetilde e_{i,j} \}_{i,j\geq 1}$ are the ordered excursion of $W$ away from zero (using the stopping times $\widetilde{\xi}_i$ instead of $\xi_i$, as indicated in the above remark), note that under the above scaling of the excursions of $W$ the ordering $\preceq$ of the positive excursion is preserved, as well as the ordering of the negative excursions. However, if $a\neq \vert b \vert$ the ordering of positive and negative excursion is not preserved; for example, if $a>\vert b\vert$, and $i\geq 1$ and $1\leq j<k$ are such that $\widetilde{e}_{i,j}$ is a positive excursion and $\widetilde{e}_{i,k}$ is a negative excursion of $W$ with $1< R(\widetilde e_{i,j}) / R(\widetilde e_{i,k})< a^2 /b^2$, then $\widetilde e_{i,j} \preceq \widetilde e_{i,k}$ and after scaling the lengths of the excursions satisfy
\begin{equation*}
R(s_{a,b}(\widetilde e_{i,j}))=\frac{1}{a^2} R(\widetilde e_{i,j}) < \frac{1}{b^2}R(\widetilde e_{i,k})=R(s_{a,b}(\widetilde e_{i,k})),
\end{equation*}
so the order of the scaled excursion is reversed: $s_{a,b}(\widetilde e_{i,k})\preceq s_{a,b}(\widetilde e_{i,j})$.

If $(V_{i,j})_{i,j\geq 1}$ is the sequence of random variables representing the signs of $W$ during the corresponding excursions $(\widetilde e_{i,j})_{i,j\geq1}$, the above discussion shows that $U_{i,j}$ (the sign of $X$ during the $j^\text{th}$ longest excursion in $E_i$) is $V_{i,k}$ (the sign of $W$ during the $k^\text{th}$ longest excursion in $\widetilde E_i$), provided that after the scaling of the excursions of $W$, $\widetilde e_{i,k}$ becomes the $j^\text{th}$ longest excursion in $\widetilde E_{i}$.

We will first show that $(U_{1,n})_{n\geq 1}$ is an i.i.d. sequence.

In the following, we will assume and use the basic results and the notation of \cite{Revuz and Yor} (Chap. XII) concerning the It\^{o} excursion theory. Recall that the excursion process $\widetilde{e}=(\widetilde{e}_t)_{t>0}$ of the Brownian motion $W$ is a  $(\mathcal{F}_{\tau_t}^W)_{t> 0}$-Poisson point process ($\tau_t=\inf\{s> 0: L_s^0(W)>t\}$ is the right-continuous inverse of the local time $L^0(W)$ of $W$ at the origin), with characteristic measure
\begin{equation}
n (\Gamma)=\frac{1}{t} E(N_t^\Gamma), \qquad t>0,
\end{equation}
for each $\Gamma \in \mathcal{U}_\delta$ in the measurable space $({U}_\delta, \mathcal{U}_\delta)$ of excursions  of $W$, where $N_t^{\Gamma}(\omega)=\sum_{s\le t} 1_{(0,t]\times \Gamma}(s,\widetilde{e}_s(\omega)$ if $t>0$, and $N_0^\Gamma=0$.

The excursion length process $(R(\widetilde{e}_t))_{t>0}$ is also a $(\mathcal{F}_{\tau_t}^W)$-Poisson point process, with law under $n$ given by
\begin{equation}
n(R>x)=\frac1t E(N_t^{\{R>x\}})=\sqrt{\frac{2}{\pi x}}, \qquad t,x>0.
\end{equation}

Moreover, for each $\Gamma \in \mathcal{U}_\delta$ with $n(\Gamma)<\infty$, $\left(N_t^\Gamma-tn(\Gamma),\mathcal{F}_{\tau_t}\right)_{t\ge0}$ is a martingale. The local time at the origin of $W$ can be described as the right continuous inverse of $\tau_t$, that is $L_t^0(W)=\inf\{s>0:\tau_s>t\}$, so in particular it is a $(\mathcal{F}_{\tau_t})_{t\ge0}$-stopping time. In particular $t\wedge L_1$ is a bounded $(\mathcal{F}_{\tau_t})_{t\ge0}$-stopping time for any $t>0$, so by Doob's optional stopping theorem it follows that $$EN_{t\wedge L_1}^{\{R>x\}} =n(\{ R > x \}) E\left(t\wedge L_1\right)$$

Letting $t\nearrow \infty$ and using monotone convergence on the right side and bounded convergence on the left side (note that $N_{L_1}^{\{R>x\}}-1$ is the same as the number of excursions of $W$ away from the origin, with length greater than $x$ and completed by time $1$, and since the total length of these excursions is less than or equal to $1$, we have $N_{t\wedge L_1}^{\{R>x\}}-1 \le N_{L_1}^{\{R>x\}} -1\le \frac{1}{x}$), we obtain
\begin{equation*}
E N_{L_1}^{\{R>x\}}=\lim_{t\nearrow \infty} E N_{t \wedge L_1}^{\{R>x\}} = n(\{ R > x \})\lim_{t \nearrow \infty}  E\left(t\wedge L_1\right) = n(\{ R > x \}) E\left(L_1\right),
\end{equation*}
which shows that the intensity of the Poisson random variable  $N_{L_1}^{\{R>x\}}$ is also $n(\{R>x\})=\sqrt{\frac{2}{\pi x}}$.

Consider now the processes
\begin{equation}
N_1(x)=N_{L_1}^{\{R>a^2x^{-2}\}\cap \mathcal{U}_\delta^+} \quad\text{and}\quad N_2(x)=N_{L_1}^{\{R>b^2x^{-2}\}\cap \mathcal{U}_\delta^-}, \qquad x>0,
 \end{equation}
representing the number of positive, respectively negative excursions of $W$ starting before time $1$, with lengths greater that $(\frac{a}{x})^2$, respectively $(\frac{b}{x})^2$.

\label{N_1 and N_2}

Both $N_1$ and $N_2$ are nondecreasing, right-continuous processes with independent increments. For $0<x<y$, the increment $N_1(y)-N_1(x)$ is a Poisson random variable with expectation
\begin{eqnarray*}
E(N_1(y)-N_1(x))&=&E\left (N_{L_1}^{\{a^2y^{-2}<R<a^2x^{-2}\}\cap \mathcal{U}_\delta^+}\right)\\
&=&n_+(a^2y^{-2}<R<a^2x^{-2}) E L_1\\
&=&\frac12 n(a^2y^{-2}<R<a^2x^{-2}) EL_1\\
&=&\frac12\sqrt{\frac{2}{\pi}} \left(\frac{y}{a}-\frac{x}{a}\right) EL_1,
\end{eqnarray*}
and similarly
\begin{equation}
E(N_2(y)-N_2(x))=\frac12\sqrt{\frac{2}{\pi}} \left(\frac{y}{\vert b\vert}-\frac{x}{\vert b\vert}\right) EL_1,
\end{equation}
where $n_+$ and $n_-$ denote the restrictions of $n$ to the measurable space of positive excursions $(U_\delta^+,\mathcal{U}_\delta^+)$, respectively to the space of negative excursions $(U_\delta^+,\mathcal{U}_\delta^+)$, so $n_+(\Gamma)=n_-(\Gamma)=\frac12 n(\Gamma)$ for any $\Gamma \in \mathcal{U}_\delta$.

It follows that $(N_1(x))_{x>0}$ and $(N_2(x))_{x>0}$ are Poisson processes, with intensities
\begin{equation}
\lambda_1=\frac{1}{a}\sqrt{\frac1{2\pi}} EL_1\qquad \text{and} \qquad \lambda_2=\frac{1}{\vert b\vert}\sqrt{\frac1{2\pi}} E L_1 ,
\end{equation}
which may be written in the form $\lambda_1=p_1\lambda$ and $\lambda_2=p_2 \lambda$, where
\begin{equation}\label{p_1, p_2, lambda}
p_1=\frac{\vert b\vert}{a+\vert b\vert},\qquad p_2=\frac{a}{a+\vert b \vert},\qquad \lambda=\frac{a+\vert b\vert}{a\vert b\vert}\sqrt{\frac1{2\pi}} E L_1.
\end{equation}

Since the sets $\{R>a^2x^{-2}\}\cap \mathcal{U}_\delta^+$ and $\{R>b^2x^{-2}\}\cap \mathcal{U}_\delta^-$ used in the definition of $N_1(x)$ and $N_2(x)$ are disjoint, $N_1$ and $N_2$ are independent Poisson processes with the above intensities.

It follows (see for example \cite{Shreve}, Theorem 11.3.3) that the process $Q$ defined by $Q(0)=0$ and
\begin{equation}
Q(x)=(+1)\cdot N_1(x)+(-1)\cdot N_2(x),\qquad x>0,
\end{equation}
is a compound Poisson process, with jump sizes $\pm1$. Moreover, if $\widetilde{U}_1,\widetilde{U}_2,\ldots$ represent the successive jumps of $Q$, and if $N(x)=N_1(x)+N_2(x)$ represents the total number of jumps of $(Q(y))_{0\leq y\leq x}$, then $Q$ can be written in the form
\begin{equation}\label{alternate representation of Q}
Q(x)=\sum_{j=1}^{N(x)} \widetilde{U}_j, \qquad x>0,
\end{equation}
where $N$ is a Poisson process with intensity $\lambda$ and $\widetilde{U}_1, \widetilde{U}_2,\ldots$ is an i.i.d sequence of random variables, taking the value $+1$ with probability $p_1$ and the value $-1$ with probability $p_2$, given (\ref{p_1, p_2, lambda}) above.

Recall now the discussion following (\ref{correspondence between excursions}): the $j^\text{th}$ longest excursion $e^Y_{i,j}$ of $Y$ in $E_i$ is the absolute value of the $j^\text{th}$ longest scaled excursion of $W$ in $\widetilde{E}_{i}$, where the scaling map $s_{a,b}$ scales positive excursions by $a$, and negative excursions by $b$. This means that if $\widetilde{e}_{i,k}$ becomes the $j^\text{th}$ longest excursion of $W$ in $\widetilde{E}_i$ after scaling, then $$e^Y_{i,j}(u)=\left\vert s_{a,b}\left(e^W_{i,k}(u)\right)\right\vert=\frac{1}{\big\vert \sigma_{a,b} {\big(e^W_{i,k}\big)}\big\vert} \left \vert e_{i,k}^W \left(\sigma_{a,b}^2\left(e_{i,k}^W\right) u\right)\right \vert,$$ and moreover by (\ref{representation no 2}) we also have %$$e^X_{m,n}(u)=e^W_{m,k}\left (\sigma^2\left(e_{m,k}^W\right) u \right),$$ which shows in particular
that the sign $U_{i,j}$ of $X$ during the excursion $e^Y_{i,j}$  is the same as the sign $V_{i,k}$ of $W$ during the corresponding excursion $e^W_{i,k}$. Using this and the definitions of $N_1(x), N_2(x), N(x)$, and $Q(x)$ above, we obtain
\begin{eqnarray*}
Q(x)&=&N_1(x)-N_2(x)\\
&=&N_{L_1}^{\{R>a^2x^{-2}\}\cap \mathcal{U}^+}-N_{L_1}^{\{R>b^2x^{-2}\}\cap\mathcal{U}^-}\\
&=&N_{L_1}^{\{a^{-2} R>x^{-2}\}\cap\mathcal{U}^+}-N_{L_1}^{\{b^{-2}R>x^{-2}\}\cap\mathcal{U}^-}\\
&=&N_{L_1}^{\{R\circ s_{a,b}>x^{-2}\}\cap\mathcal{U}^+}-N_{L_1}^{\{R\circ s_{a,b}>x^{-2}\}\cap\mathcal{U}^-}\\
%&=&N_{L_1}^{\{R\circ s_{a,b}>x^{-2}\}\cap\mathcal{U}^+}-N_{L_1}^{\{R\circ s_{a,b}>x^{-2}\}\cap\mathcal{U}^-}\\
%&=&\sum_{i=1}^{N_1(x)}(+1)+\sum_{i=1}^{N_1(x)}(-1) \\
%&=&\sum_{i=1}^{N(x)}V_{1,i}\\
&=&\sum_{j=1}^{N(x)}U_{1,j},
\end{eqnarray*}
where the last equality expresses the fact that the sum of signs the first $N(x)$ longest excursions of $X$ is the same as the sum as the number $N_1(x)$ of scaled positive excursions of $W$ with length greater than $x^{-2}$ minus the number $N_2(x)$ of scaled negative excursions of $W$ with length greater than $x^{-2}$, which by the previous discussion is the same.

Since $(N(x))_{x>0}$ is a Poisson process, in particular it increases only by jumps of size $1$, and from its definition we have $N(\infty)=N_{L_1}^{(0,\infty)}=\infty$. Using this, and comparing the above representation of $Q(x)$ with (\ref{alternate representation of Q}), it follows that $U_{1,j}=\widetilde{U}_j$ for all $j\geq 1$, and therefore $(U_{1,j})_{j\geq 1}$ is an i.i.d. sequence of random variables taking the values $+1$ and $-1$ with probabilities $p_1=\frac{\vert b \vert}{a+\vert b\vert}$, respectively $p_2=\frac{a}{a+\vert b\vert}$, as needed.

Using the strong Markov property of $X$ and $Y$ at the stopping time $$\xi_1=\inf\{t>1: Y_t=0\}=\inf\{t>1:X_t=0\},$$ the argument above shows that the sequence $(U_{2,j})_{j\geq1}$ is an i.i.d. sequence with the same distribution as $(U_{1,j})_{j\geq1}$, and also independent of it. Inductively, using the strong Markov property at the stopping times $$\xi_{i+1}=\inf\{t>\xi_i+1: Y_t=0\}=\inf\{t>\xi_i+1:X_t=0\},$$ we obtain that $(U_{i+2,j})_{j\geq1}$ is an i.i.d. sequence of random variables with the same distribution as $(U_{i^\prime,j})_{1\leq i^\prime \leq i+1, j\geq 1}$, and also independent of it. This shows that $(U_{i,j})_{i,j\geq1}$ is an i.i.d. sequence of random variables with distribution given in the conclusion of the theorem.

To conclude the proof, we have left to show that the sequence $(U_{i,j})_{i,j\ge 1}$ is also independent of $Y$. We will first show that the process $\widetilde{W}$ defined by (\ref{definition of widetilde W}) below is a skew Brownian motion. Note that by (\ref{representation no 1}), the $\sigma$-algebra $\sigma(Y_s:s\leq t)$ is the same as the $\sigma$-algebra
\begin{equation*}
\sigma\left( \frac{1}{\sigma_{a,b}(X_s)} X_{s}:s\leq t\right)=\sigma\left( \left\vert \frac{1}{\left\vert\sigma_{a,b}(X_s)\right\vert} X_{s}\right\vert:s\leq t\right)=\sigma\left(\big\vert\widetilde{W}_s \big\vert:s\leq t\right) ,
\end{equation*}
so $\mathcal{F}^Y_t = \mathcal{F}_{t}^{\vert \widetilde{W} \vert}$, where $\widetilde{W}$ represents the process defined by
\begin{equation}\label{definition of widetilde W}
\widetilde{W}_t=\frac{1}{\left\vert \sigma_{a,b}(X_{t})\right\vert }X_{t}, \qquad t\geq 0.
\end{equation}

Applying the It\^{o}-Tanaka formula to the function $f(x)=\dfrac{1}{\vert\sigma_{a,b}(x)\vert} x$ (a difference of two convex functions) and to $X$, we obtain
\begin{eqnarray*}
\frac{1}{\left\vert \sigma_{a,b}(X_{t})\right\vert }X_{t}&=&\int_0^t \frac{1}{\left\vert \sigma_{a,b}(X_{s})\right\vert } d X_{s} + \frac12\left( \frac{1}{a}-\frac{1}{\vert b\vert}\right) L_t^0(X)\\
&=&\int_0^t \frac{1}{\left\vert \sigma_{a,b}(X_{s})\right\vert } \sigma_{a,b}(X_s) d B_{s} + \frac12\left( \frac{1}{a}-\frac{1}{\vert b\vert}\right) L_t^0(X)\\
&=&\int_0^t \mathrm{sgn}(X_s) d B_{s} + \frac12\left( \frac{1}{a}-\frac{1}{\vert b\vert}\right) L_t^0(X),
\end{eqnarray*}
where $L^0(X)$ represents the semimartingale local time of $X$ at the origin .

The above shows that $\widetilde{W}$ is a continuous semimartingale, and since
$$\big\langle \widetilde{W}\big\rangle_t=\int_0^t \mathrm{sgn}^2(X_s) d\langle B\rangle_s =\int_0^t 1 ds=t, \qquad t\ge 0,$$
the martingale part of $\widetilde{W}$ is a Brownian motion $\widetilde{B}$. The previous equation is thus equivalent to
\begin{equation}\label{representation of tilde W}
\widetilde{W}_t=\widetilde{B}_t+\frac12\left( \frac{1}{a}-\frac{1}{\vert b\vert}\right) L_t^0(X), \qquad t \ge 0.
\end{equation}

Note that since $\widetilde{W}$ is a continuous semimartingale with $\langle \widetilde{W}\rangle_t =t$, it spends zero Lebesgue measure at the origin (see Corollary VI.1.6 of \cite{Revuz and Yor}). It follows that the symmetric semimartingale local time $\widehat{L}^0\big(\widetilde{W}\big)$ of $\widetilde{W}$ at the origin is given by
\begin{eqnarray*}
\widehat{L}_t^0\big(\widetilde{W}\big)&=&\lim_{\epsilon \searrow 0} \frac{1}{2\varepsilon}\int_0^t 1_{(-\varepsilon,\varepsilon)}\big(\widetilde{W}_s\big) d \big\langle \widetilde{W}\big\rangle_s\\
&=&\frac12 \left( \lim_{\varepsilon \searrow 0} \frac{1}{\varepsilon}\int_0^t 1_{[0,\varepsilon)}\big(\widetilde{W}_s\big) ds +\lim_{\varepsilon \searrow 0} \frac{1}{\varepsilon}\int_0^t 1_{(-\varepsilon,0]}\big(\widetilde{W}_s\big) ds\right)\\
&=&\frac12 \left( \lim_{\varepsilon \searrow 0} \frac{1}{\varepsilon}\int_0^t 1_{[0,\varepsilon)}\left(a^{-1} X_s\right) ds +\lim_{\varepsilon \searrow 0} \frac{1}{\varepsilon}\int_0^t 1_{(-\varepsilon,0]}\big({\vert b\vert}^{-1} X_s\big) ds\right)\\
&=&\frac12 \left(\lim_{\varepsilon \searrow 0} \frac{1}{\varepsilon}\int_0^t 1_{[0,a\varepsilon)}\left(X_s\right) \frac1{a^2}d\langle X \rangle_s +\lim_{\varepsilon \searrow 0} \frac{1}{\varepsilon}\int_0^t 1_{(-\vert b\vert\varepsilon,0]}\left( X_s\right) \frac{1}{b^2}d\langle X \rangle_s\right)\\
&=&\frac12\left(\frac1a{L}_t^0(X)+\frac{1}{\vert b \vert} {L}_t^0(-X)\right),
\end{eqnarray*}
where in the penultimate equality we have used the fact that the quadratic variation of $X$ is given by $\langle X\rangle_t=\int_0^t \sigma_{a,b}^2 (X_s) ds$, $t\ge 0$.

Since $X$ is a continuous local martingale which spends zero Lebesgue time at the origin, by Corollary VI.1.9 of \cite{Revuz and Yor} it follows that $L_t^0(X)=L_t^0(-X)$ for any $t\ge 0$, and therefore we obtain $\widehat{L}_t^0\big(\widetilde{W}\big)=\frac12\left (\frac1a+\frac{1}{\vert b \vert} \right){L}_t^0(X)$. It follows that the equation (\ref{representation of tilde W}) can rewritten in the equivalent form
\begin{equation}\label{definition of W tilde}
\widetilde{W}_t=\widetilde{B}_t+\frac{\vert b\vert-a}{\vert b\vert+a} \widehat{L}^0_t\big(\widetilde{W}\big), \qquad t\ge 0,
\end{equation}
which shows that $\widetilde{W}$ is a skew Brownian motion with parameter $\alpha = \frac12\left(1+\frac{\vert b \vert-a}{\vert b \vert+a}\right)=\frac{\vert b \vert}{a+\vert b \vert}$ (see for example p. 401 of \cite{Revuz and Yor}, or \cite{Harrison-Shepp} and the references cited therein for more details about this process).

It is known that the skew Brownian motion with parameter $\alpha \in(0,1)$ is a Markov process which behaves like ordinary Brownian motion on each of the two intervals $(-\infty,0)$ and $(0,\infty)$, and that starting at the origin it enters the positive real axis with probability $\alpha$. Further, the explicit transition densities of the process are known (see for example \cite{Revuz and Yor}, pp. 82).

\label{pagina de referinta start descompuere Ito-McKean}
We are now ready to show that the sequence $(U_{i,j})_{i,j\ge 1}$ is also independent of $\mathcal{F}^Y_\infty$. In \cite{Ito}, It\^{o} and McKean gave a decomposition of the excursion process of Brownian motion as follows. If $e^W$ is the excursion process of the Brownian motion $W$, let $(I_n)_{n\ge 1}$ be a certain numbering of the excursion intervals which depends only on the zero set $Z^W=\{t\ge0:W_t=0\}$ of $W$ (see the Appendix \ref{Appendix} for the actual choice of this numbering), consider the corresponding \emph{unscaled normalized excursion} processes $(\mathbf{e}_n)_{n\ge 1}$ of $W$ defined by $\mathbf{e}_n(t)=\frac{1}{\sqrt{\vert I_n\vert}} \left\vert W_{t\vert I_n\vert +\inf I_n}\right\vert$, $0\le t\le 1$, $n=1,2,\ldots$, and the corresponding signs $(U_n)_{n\ge 1}$ of excursions of $\widetilde{W}$ defined by $U_n=\mathrm{sgn}(W_t)$ if $t\in I_n$, $n=1,2,\ldots$. It\^{o} and McKean showed that $(\mathbf{e}_n)_{n\ge 1}$ are equal in law and mutually independent, $(U_n)_{n\ge 1}$ is an i.i.d. sequence of random variables which take the values $\pm1$ with probability $\frac12$, and that $(\mathbf{e}_n)_{n\ge 1}$, $(U_n)_{n\ge 1}$ and $Z^W$ are independent. Blumenthal (\cite{Blumenthal}, pp. 113 -- 118) showed that the actual numbering of the excursion intervals in unimportant, as long as it is a measurable function of the $\sigma$-algebra $\mathcal{Z}^W=\sigma(l=l(\omega),r=r(\omega):(l,r) \text{ excursion interval of } W)$ generated by the random variables which represent the left and right endpoints of the excursion intervals of $W$. He showed (Theorem 1.5 in \cite{Blumenthal}) that if $L_1,\ldots,L_n$ are $\mathcal{Z}^W$-measurable random variables which represent distinct left ends of excursion intervals of $W$, and if $\mathbf{e}_{L_i}(t)=\frac{1}{\sqrt{R_i-L_i}}\left\vert W_{L_i+t(R_i-L_i)}\right\vert$, $0\le t\le 1$, where $R_i$ is the right endpoint of the excursion interval with left endpoint $L_i$, and if  $U_{L_i}=\mathrm{sgn}(W_t)$ for $t\in(L_i,R_i)$, $i=1,\ldots,n$, then for each $i=1,\ldots,n$, $\textbf{e}_{L_i}$ has the law of an unsigned normalized Brownian excursion and $P(U_{L_i}=1)=1-P(U_{L_i}=-1)=\frac12$, and
\[
\textbf{e}_{L_1},\ldots,\textbf{e}_{L_n}, U_{L_1},\ldots,U_{L_n},\mathcal{Z}^W
\]
are independent.

Since the absolute value of a skew Brownian motion is a reflecting Brownian motion, and skew Brownian motion with parameter $\alpha\in [0,1]$ starting at the origin enters the positive axis with probability $\alpha$ (skew Brownian motion with parameter $\alpha=\frac12$ is the same as ordinary Brownian motion), the same proof as in \cite{Blumenthal} (see the proof of Theorem 5.1, pp. 115 -- 118) shows that the result above is true if we replace the Brownian motion $W$ by a skew Brownian motion $W^{\alpha}$ with parameter $\alpha\in[0,1]$, and the constant $\frac12$ which gives the distribution of signs of Brownian motion by the constant $\alpha$ which gives the distribution of signs of skew Brownian motion with parameter $\alpha$. We are now going to apply this result to the process $\widetilde{W}$ defined in (\ref{definition of widetilde W}), which by (\ref{definition of W tilde}) is seen to be skew Brownian motion $\widetilde{W}$ with parameter $\alpha=\frac{\vert b \vert}{a+\vert b \vert}$.
\label{pagina de referinta end descompuere Ito-McKean}

Consider now the numbering $(\widetilde{I}_{n,k})_{n,k\ge 1}$, introduced on pp. \pageref{sign choice} -- \pageref{iid sign choice}, of the excursion intervals of the skew Brownian motion $\widetilde{W}$, and let $L_{n,k}, R_{n,k}$ be the corres\-ponding left and right endpoints of these intervals, that is $\widetilde{I}_{n,k}=(L_{n,k},R_{n.k})$, $n,k\ge 1$. The numbering depends only on the zero set of $Z^{\widetilde{W}}$ of $\widetilde{W}$, so in particular $(L_{n,k})_{n,k\ge 1}$ are distinct $\mathcal{Z}^{\widetilde{W}}$-measurable random variables. If $\widetilde{\textbf{e}}_{L_{n,k}}(t)=\frac{1}{\sqrt{\vert\widetilde{I}_{n,k}\vert}}\left\vert \widetilde{W}_{L_{n,k}+t\vert\widetilde{I}_{n,k}\vert}\right\vert$, $0\le t\le 1$, $n,k\ge 1$, are the unsigned normalized excursion of $\widetilde{W}$ and $\widetilde{U}_{L_{n,k}}=\mathrm{sgn}(\widetilde{W}_t)$ for $t\in\widetilde{I}_{n,k}$, $n,k\ge1$, are the corresponding signs of excursions of $\widetilde{W}$, by the previous remark it follows that the collection
\[
\left(\widetilde{\textbf{e}}_{L_{n,k}}\right)_{n,k\ge 1},\left( \widetilde{U}_{L_{n,k}}\right)_{n,k\ge 1},\mathcal{Z}^{\widetilde{W}}
\]
is independent.

Note that $\big\vert \widetilde{W}_t\big\vert=\sqrt{\vert \widetilde{I}_{n,k}\vert} \, \widetilde{\mathbf{e}}_{L_{n,k}}\big( t\slash {\vert \widetilde{I}_{n,k}\vert}\big)$ if $t\in \widetilde{I}_{L_{n,k}}$ and $\widetilde{W}_t=0$ otherwise, so $\widetilde{W}_t$ is a $\sigma(\widetilde{\textbf{e}}_{L_{n,k}}, n,k\ge 1)\vee  \mathcal{Z}^{\widetilde{W}}$-measurable random variable for each $t\ge 0$. It follows that $\mathcal{F}^{\vert \widetilde{W}\vert}_\infty \subset\sigma(\widetilde{\textbf{e}}_{L_{n,k}}: n,k\ge 1)\vee  \mathcal{Z}^{\widetilde{W}}$, and therefore the sequence $\big( \widetilde{U}_{L_{n,k}}\big)_{n,k\ge 1}$ is independent of $\mathcal{F}^{\vert \widetilde{W}\vert}_\infty$. Recalling the definition (\ref{definition of widetilde W}) of $\widetilde{W}$ and (\ref{representation no 1}), we see that $\vert \widetilde{W}\vert=Y$ and $\mathrm{sgn}(X_t)=\mathrm{sgn}(\widetilde{W}_t)$. It follows that the numbering of the excursion intervals of $\widetilde{W}$ and $Y$ is the same, $\widetilde{I}_{n,k}=I_{n,k}$, $n,k\ge 1$. Since by definition $\widetilde{U}_{L_{n,k}}$ is the sign of $\widetilde{W}$ during the interval $\widetilde{I}_{L_{n,k}}$, and $U_{n,k}$ is the sign of $X$ during the excursion interval $I_{n,k}$, it follows that $\widetilde{U}_{L_{n,k}}=U_{n,k}$ for all $n,k\ge 1$. We conclude that $(U_{n,k})_{n,k\ge 1}=(\widetilde{U}_{L_{n,k}})_{n,k\ge 1}$ is independent of $\mathcal{F}^{\vert \widetilde{W}\vert}_\infty=\mathcal{F}^{Y}_\infty$. This, together with the previous part of the proof shows that $U$ is an i.i.d.  sign choice for $Y$ in the sense of Definition \ref{iid sign choice}, concluding the proof of the theorem.
\end{proof}

\section{Extensions\label{Section 2}}

The result of the previous section can be extended as follows.

\begin{theorem}
\label{theorem 2}Let $\sigma $ be a measurable function on $\mathbb{R}$ such
that $\left\vert \sigma \right\vert $ is bounded above and below by positive
constants and suppose there exists a strictly increasing function $f$ on $%
\mathbb{R}$ such that%
\begin{equation}
\left\vert \sigma \left( x\right) -\sigma \left( y\right) \right\vert
^{2}\leq \left\vert f\left( x\right) -f\left( y\right) \right\vert ,\qquad
x,y\in \mathbb{R}.  \label{condition on sigma}
\end{equation}

Further, assume that $\sigma $ is an odd function on $\mathbb{R}^{\ast }$
and $x\sigma \left( x\right) \geq 0$ for all $x\in \mathbb{R}$.

The following assertions are true relative to the stochastic differential equation
\begin{equation}\label{SDE3}
X_{t}=\int_{0}^{t}\sigma \left( X_{s}\right) dB_{s},\qquad t\geq 0,
\end{equation}
where $B$ is a $1$-dimensional Brownian motion starting the origin.
\begin{enumerate}
\item (Existence) There exists a weak solution $(X,B,\mathbb{F})$, and every such solution is also a $\vert x\vert$-strong solution.

\item (Uniqueness) $\vert x\vert$-strong uniqueness holds.

\item (Construction) A weak solution and a $\vert x \vert $-strong solution can be constructed from a given Brownian motion $(B,\mathbb{F}^B)$ as follows: $\big(  U Y,B,\mathbb{G}\big) $, where $Y$ is the pathwise unique solution to
\begin{equation}\label{equation for Y}
Y_{t}=\int_{0}^{t}\left\vert \sigma \left( Y_{s}\right) \right\vert
dB_{s}+\widehat{L}_{t}^{0}\left( Y\right) ,\qquad t\geq 0,
\end{equation}
$\widehat{L}^{0}\left( Y\right)$ is the symmetric semimartingale local time of $Y$, $U$ is an i.i.d. sign choice for $Y$ taking the values $\pm1$ with equal probability, and $\mathbb{G}$ is certain enlargement of the filtration $\mathbb{F}^B$ which satisfies the usual hypothesis (see Appendix \ref{Appendix}).

\item (Representation) Every weak solution $(X,B,\mathbb{F})$ has the representation $X=U Y$, where $U$ and $Y$ are as above.
\end{enumerate}
\end{theorem}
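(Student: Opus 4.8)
The plan is to follow the architecture of the proof of Theorem \ref{theorem 1}, replacing reflecting Brownian motion by the reflecting diffusion $Y$ of (\ref{equation for Y}) and exploiting the oddness of $\sigma$ to force the symmetric ($p=\tfrac12$) sign distribution. The organizing observation is that, since $x\sigma(x)\ge 0$ and $\sigma$ is odd, one has $\sigma(x)=\mathrm{sgn}(x)\,|\sigma(x)|$ with $|\sigma|$ even, so that $|\sigma(X_s)|=|\sigma(|X_s|)|$ and hence $\sigma(X_s)=U_s|\sigma(Y_s)|$ once we set $Y=|X|$ and $U=\mathrm{sgn}(X)$.

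I would first dispose of existence and uniqueness. For a weak solution $X$, the Tanaka formula for the symmetric local time gives
\begin{equation*}
|X_t|=\int_0^t \mathrm{sgn}(X_s)\,dX_s+\widehat{L}_t^0(X)=\int_0^t |\sigma(X_s)|\,dB_s+\widehat{L}_t^0(X),
\end{equation*}
and $|\sigma(X_s)|=|\sigma(|X_s|)|$ shows that $Y=|X|$ solves (\ref{equation for Y}). The reverse triangle inequality gives $\big||\sigma(x)|-|\sigma(y)|\big|^2\le|\sigma(x)-\sigma(y)|^2\le|f(x)-f(y)|$, so $|\sigma|$ inherits condition (\ref{condition on sigma}); being also bounded below by a positive constant, (\ref{equation for Y}) has a pathwise unique, $\mathbb{F}^B$-adapted solution (the reflected-boundary analogue of Le Gall's theorem). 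Thus $|X|$ is $\mathbb{F}^B$-adapted, which is exactly the statement that every weak solution is a $|x|$-strong solution, and pathwise uniqueness of (\ref{equation for Y}) is precisely $|x|$-strong uniqueness. Weak existence follows from Engelbert--Schmidt, since $|\sigma|$ bounded below forces $I(\sigma)=Z(\sigma)=\varnothing$.

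For the construction I would form $X=UY$ from $B$ and verify the two requirements. That $B$ stays a Brownian motion under the enlarged filtration $\mathbb{G}$ follows as in Theorem \ref{theorem 1}: the Appendix yields that $X=UY$ is a $\mathbb{G}$-strong Markov process, whence $Y=|X|$, its local time $\widehat{L}^0(Y)$, and therefore $B=\int_0^\cdot |\sigma(Y_s)|^{-1}\big(dY_s-d\widehat{L}_s^0(Y)\big)$ are strong Markov, and L\'evy's characterization applies. To check that $X$ solves (\ref{SDE3}), substitute $|\sigma(Y_s)|\,dB_s=dY_s-d\widehat{L}_s^0(Y)$ and use $\sigma(X_s)=U_s|\sigma(Y_s)|$ with $U_s=1$ on $\{Y_s=0\}$ to reduce the claim to $\int_0^t U_s 1_{\mathbb{R}^\ast}(X_s)\,dY_s=U_tY_t$. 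This identity is proved by the same $\varepsilon$-downcrossing decomposition as in Theorem \ref{theorem 1}, with stopping times $\sigma_i,\tau_i$ of $Y$; Proposition \ref{proposition on stopping times of iid sign choice} makes $(U_{\sigma_i})_{i\ge1}$ i.i.d., $\pm1$ with probability $\tfrac12$, hence mean zero and variance one, and each error term is controlled exactly as before. The only change is that $d\langle Y\rangle_s=|\sigma(Y_s)|^2\,ds$ is now merely comparable to $ds$; boundedness of $|\sigma|$, together with the scale/speed estimates $E\sigma_1=O(\varepsilon^2)$ and $\varepsilon D_t(\varepsilon)\to c\,\widehat{L}_t^0(Y)$ for the reflected diffusion, keeps every term $O(\varepsilon)$ in $L^2$.

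The representation is the heart of the matter, and here oddness does the decisive work. For a weak solution $X$, the Dambis--Dubins--Schwarz theorem writes $X_t=W_{A_t}$ with $A_t=\int_0^t\sigma^2(X_s)\,ds$ and $W$ a Brownian motion, so $|X_t|=|W_{A_t}|$ and $\mathrm{sgn}(X_t)=\mathrm{sgn}(W_{A_t})$: the excursion signs of $X$ are exactly those of $W$. The crucial point is that the inverse clock $\alpha_u=\int_0^u \sigma^{-2}(W_v)\,dv=\int_0^u \sigma^{-2}(|W_v|)\,dv$ depends, by evenness of $\sigma^2$, only on $|W|$; hence $Y=|X|$ — including the lengths and the length-ordering of its excursion intervals — is a measurable functional of $|W|$ alone. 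By the It\^o--McKean decomposition in Blumenthal's form (Theorem 1.5 of \cite{Blumenthal}), the excursion signs of $W$ form an i.i.d.\ $\pm1$ sequence with $p=\tfrac12$, independent of the unsigned normalized excursions and of the zero set, hence independent of $|W|$ and therefore of $Y$. Reindexing these signs by the $|W|$-measurable (so sign-independent) numbering $(I_{i,j})$ of the excursions of $Y$ — carried out block-by-block over the stopping times $\xi_i$ via the strong Markov property, exactly as in Theorem \ref{theorem 1} — preserves both the i.i.d.\ property and the independence of $Y$. This shows $U=\mathrm{sgn}(X)$ is an i.i.d.\ sign choice for $Y$ taking $\pm1$ with equal probability, and $X=\mathrm{sgn}(X)\,|X|=UY$. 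I expect the main obstacle to be the bookkeeping in this final step: matching the $|W|$-measurable length-reordering induced by the $\sigma$-dependent time change to the block-and-length numbering of Definition \ref{iid sign choice}, and confirming that independence survives the reindexing; everything else is a symmetric specialization of the arguments already carried out for Theorem \ref{theorem 1}.
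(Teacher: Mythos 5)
Your proposal follows the paper's architecture quite closely: Engelbert--Schmidt for weak existence, the Tanaka formula plus the identity $\mathrm{sgn}(x)\sigma(x)=|\sigma(x)|=|\sigma(|x|)|$ to reduce $|X|$ to the reflected equation (\ref{equation for Y}), pathwise uniqueness for that equation (the paper invokes Theorem 4.1 of Bass--Chen, which is exactly your ``reflected-boundary analogue of Le Gall''; your observation that $|\sigma|$ inherits (\ref{condition on sigma}) by the reverse triangle inequality is the right justification), and the downcrossing decomposition for the construction. Where you genuinely diverge is in the representation step. The paper first establishes that $(U_{1,j})_{j\ge 1}$ is i.i.d.\ with $p=\tfrac12$ by the compound Poisson counting argument (the processes $N_1(x)$, $N_2(x)$, $Q(x)$ counting positive and negative excursions exceeding a given length), extends to all blocks by the strong Markov property at the $\xi_i$, and only then uses the It\^o--McKean/Blumenthal decomposition for independence from $Y$. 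You instead get both the i.i.d.\ property and the independence in one shot: the signs of $W$ under a $\mathcal{Z}^W$-measurable numbering are i.i.d.\ and independent of $\sigma(\mathbf{e}_n:n\ge1)\vee\mathcal{Z}^W\supset\mathcal{F}^{|W|}_\infty$, the reindexing induced by the time change and the length-ordering of $Y$'s excursions is an $\mathcal{F}^{|W|}_\infty$-measurable bijection (here oddness of $\sigma$ is essential, exactly as you say), and a random permutation that is independent of an i.i.d.\ sequence preserves its law and its independence of the conditioning $\sigma$-algebra. This is legitimate in the symmetric case and shortcuts the Poisson machinery, which the paper really only needs in Theorem \ref{theorem 1} where the asymmetric scaling by $a$ and $b$ reorders positive and negative excursions differently and produces a biased sign distribution. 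The conditioning computation at the end of the paper's proof is precisely the formalization of your ``independence survives the reindexing'' step, so your anticipated obstacle is resolvable exactly as you expect.

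The one place your plan has a real gap is the construction step, where you write that ``the Appendix yields that $X=UY$ is a $\mathbb{G}$-strong Markov process.'' The Appendix construction (Salisbury's theorem applied to the excursion measure $n_\alpha$, and the It\^o--McKean randomized-sign description) is carried out for a \emph{reflecting Brownian motion}, not for the time-changed reflecting diffusion $Y$ solving (\ref{equation for Y}), so it cannot be invoked directly. The paper's fix is to time-change first: set $\widetilde Y_t=Y_{\alpha_t}$ with $\alpha$ the inverse of $A_t=\int_0^t\sigma^2(Y_s)\,ds$, check that $\widetilde Y$ is a genuine reflecting Brownian motion driven by $\widetilde B_t=\int_0^{\alpha_t}\sigma(Y_s)\,dB_s$, verify that the time-changed sign process $\widetilde U$ is still an i.i.d.\ sign choice for $\widetilde Y$ (this requires an argument, since the time change induces a nontrivial bijection between the ordered excursion intervals of $Y$ and of $\widetilde Y$), apply the Appendix to $(\widetilde Y,\widetilde U)$ with $\alpha=\tfrac12$, and then define $\mathcal{G}_t=\widetilde{\mathcal{G}}_{A_t}$. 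Your proposal needs this intermediate reduction spelled out; without it the strong Markov property of $UY$ under $\mathbb{G}$, and hence the claim that $B$ remains a $\mathbb{G}$-Brownian motion, is unsupported. Since you already deploy the same time change in the representation step, the repair is available to you, but as written this link in the chain is missing.
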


\begin{proof}
%(\textbf{Proof of weak existence}) 
The fact that (\ref{SDE3}) has a weak solution follows immediately from the classical results of Engelbert and Schmidt (\cite{Engelbert-Schmidt}): since the set $I(\sigma)$ of non-local integrability of $\sigma^{-2}$ is in this case empty, it coincides with the set $Z(\sigma)=\varnothing$ of zeroes of $\sigma$, so (\ref{SDE3}) has a weak solution, and moreover the solution is also unique in the sense of probability law.

%(Proof of $\vert x\vert$\textbf{-strong existence and uniqueness}) 
If $\left( X,B,\mathbb{F}\right) $ is a weak solution of (\ref%
{SDE3}), $d \langle X \rangle_t=\sigma^2(X_t) dt$ is absolutely continuous with respect to $dt$ (recall that by hypothesis $\sigma$ is bounded), and therefore $X$ spends zero Lebesgue time at the origin (see Corollary VI.1.6 in \cite{Revuz and Yor}). Applying the It\^{o}-Tanaka formula we obtain%
\begin{eqnarray}
\left\vert X_{t}\right\vert &=&\int_{0}^{t}\big(1_{(0,\infty)}\left( X_{s}\right)-1_{(-\infty,0]}(X_s)\big)dX_s+L_{t}^{0}\left( X\right) \notag\label{equation for |X|}\\
&=&\int_{0}^{t}\big(1_{[0,\infty)}\left( X_{s}\right)-1_{(-\infty,0)}(X_s)\big)dX_s+L_{t}^{0}\left( X\right) \notag\\
&=&\int_{0}^{t}\mathrm{sgn}(X_s)\sigma(X_s) dB_s+L_{t}^{0}\left( X\right),\notag
\end{eqnarray}%
where $\mathrm{sgn}(x)=1_{[0,\infty)}(x)-1_{(-\infty,0)}(x)$ and $L_t^0(X)=\lim_{\varepsilon\rightarrow0}\frac1\varepsilon\int_0^t 1_{[0,\varepsilon)}(X_s)d\langle X\rangle_s$ is the semimartingale local time of $X$ at the origin.

Denoting by $\widehat{L}^0_t(\vert X\vert)=\lim_{\varepsilon\rightarrow0}\frac1{2\varepsilon}\int_0^t 1_{[0,\varepsilon)}(\vert X_s\vert )d\langle \vert X\vert \rangle_s$ the symmetric semimartingale local time of $\vert X\vert$ at the origin, we have $$\widehat{L}^0_t(\vert X\vert)=\lim_{\varepsilon\rightarrow0}\frac1{2\varepsilon}\int_0^t 1_{(-\varepsilon,\varepsilon)}(X_s)d\langle X \rangle_s=\widehat{L}^0_t(X)=L^0_t(X),$$ since $X$ is a continuous local martingale, and therefore by Corollary VI.1.9 of \cite{Revuz and Yor} we have $\widehat{L}^0_t(X)=L_t^0 (X)$. Using the additional hypotheses on $\sigma $ and the fact that $X$ spends zero Lebesgue time at the origin, it follows that $(\vert X\vert,B,\mathbb{F})$ is a weak solution of the following stochastic differential equation %
\begin{equation}\label{aux3}
\left\vert X_{t}\right\vert =\int_{0}^{t}\left\vert \sigma \left( \left\vert
X_{s}\right\vert \right) \right\vert dB_{s}+\widehat{L}_{t}^{0}\left(\vert X\vert\right) ,\qquad
t\geq 0.
\end{equation}

The above SDE satisfies the conditions of Theorem 4.1 in \cite{Bass-Chen}
(with $a=\left\vert \sigma \right\vert $), so applying this theorem it
follows that (\ref{aux3}) has a strong solution and the solution is pathwise unique. By Theorem IX.1.7 in \cite{Revuz and Yor} it follows that every weak solution of (\ref{aux3}) is also a strong solution, so  in particular $\vert X\vert$ is $\mathbb{F}^B$-adapted and also pathwise unique. This shows that $(X,B,\mathbb{F})$ is a $\vert x\vert$-strong solution of (\ref{SDE3}) and that $\vert x\vert$-strong uniqueness holds for (\ref{SDE3}).

%(\textbf{Proof of construction}) 
If $(Y,B,\mathbb{F}^B)$ is the unique strong solution of (\ref{equation for Y}) (note that this is the same SDE as (\ref{aux3}) above), by Theorem 3.3 in \cite{Bass-Chen} $Y$ is a non-negative process. If $U$ is an i.i.d. sign choice for $Y$ in the sense of Definition \ref{iid sign choice}, which takes the values $\pm 1$ with equal probability, we will show that ($UY,B,\mathbb{G}$) is a $\vert x\vert$-strong solution (and also a weak solution) of (\ref{SDE3}), where $\mathbb{G}$ is an enlargement of the filtration $\mathbb{F}^B$ we will indicate below. Note that since $\vert U Y\vert = Y$ and $Y$ is adapted to the filtration $\mathbb{F}^B$, in order to prove the claim it suffices to prove that ($UY,B,\mathbb{G}$) is a weak solution of (\ref{SDE3}).

To do this, we will first show that $B$ remains a Brownian motion under the enlarged filtration $\mathbb{G}$. Let $X=UY$ and note that the process $(A_t)_{t\ge0}$ defined by $A_t=\int_0^t\sigma^2(Y_s)ds$, $t\ge0$, is strictly increasing in $t$ (recall that by hypothesis $\vert \sigma\vert $ is bounded below by a positive constant), hence its inverse $(\alpha_t)_{t\ge0}$ defined by $\alpha_t=\inf\{s\ge0:A_s>t\}$, $t\ge 0$, is continuous in $t$. Set $\widetilde{X}_t=X_{\alpha_t}$, $\widetilde{Y}_t=Y_{\alpha_t}$, $\widetilde{U}_t=U_{\alpha_t}$, $\widetilde{B}_t=\int_0^{\alpha_t}\sigma(Y_s)dB_s$, and $\widetilde{\mathcal{G}}_t=\mathcal{G}_{\alpha_t}$, $t \ge 0$. By the time-change for martingales theorem (see \cite{Karatzas-Shreve}, Theorem 3.4.6) it follows that $(\widetilde{B}, \widetilde{\mathbb{G}})$ is a Brownian motion (although we have not specified the filtration $\mathbb{G}$ yet, all we are using here is that $\mathbb{G}$ is an enlarged filtration of $\mathbb{F}^B$ which satisfies the usual conditions, so $\int_0^{\cdot}\sigma(Y_s)dB_s$ is a $\mathbb{G}$-adapted process).

Using the substitution $A_s=u$ we obtain $$\widehat{L}^0_{\alpha_t}(Y)=\lim_{\varepsilon\rightarrow 0} \frac{1}{2\varepsilon}\int_0^{\alpha_t} 1_{(-\varepsilon,\varepsilon)}(Y_{s}) \sigma^2(Y_s) ds= \lim_{\varepsilon\rightarrow 0} \frac{1}{\varepsilon} \int_0^{t} 1_{(-\varepsilon,\varepsilon)}(\widetilde{Y}_u)du=\widehat{L}_t^0(\widetilde{Y}),$$ so from equation (\ref{equation for Y}) we obtain $\widetilde{Y}_t=\widetilde{B}_t+\widehat{L}_t^0(\widetilde{Y})$, which shows that $\widetilde{Y}$ is the reflecting Brownian motion on $[0,\infty)$ with driving Brownian motion with $\widetilde{B}$.

Next, we will show that $\widetilde{U}$ is an i.i.d. sign choice for $\widetilde{Y}$. To see this, note that since the time change $(\alpha_{t})_{t\ge0}$ is continuous, the continuity of $\widetilde{U}\widetilde{Y}$ follows from that of $UY$, so $\widetilde{U}$ is a sign choice.

\label{pagina de referinta start corespondenta A}
To prove that is also an i.i.d. sign choice, consider the interval excursions $(I_{i,j})_{i,j\ge1}$ and $(\widetilde{I}_{i,j})_{i,j\ge1}$ of $Y$, respectively $\widetilde{Y}$, as introduced on pp. \pageref{sign choice} -- \pageref{iid sign choice}. Since $\widetilde{Y}_{A}=Y_t$, we have $Y_t=0$ iff $\widetilde{Y}_{A_t}=0$, so $A$ establishes a (monotone) bijection between the zero set $Z^Y$ of $Y$ and the zero set $Z^{\widetilde{Y}}$ of $Y$, the bijection being $t\in Z^Y\mapsto A_t\in Z^{\widetilde{Y}}$. Since $A$ is an increasing process, for each interval $I_{i,j}=(g_{i,j},d_{i,j})$ there exists a unique interval $\widetilde{I}_{k,l}=(\widetilde{g}_{k,l},\widetilde{d_{k,l}})$ such that $A_{g_{i,j}}=\widetilde{g}_{k,l}$ and $A_{d_{i,j}}=\widetilde{d}_{k,l}$, and by an abuse of notation we write in this case $A({I_{i,j}})=\widetilde{I}_{k,l}$. Defined this way, $A(\cdot)$ is a bijection between the ordered excursion intervals of $Y$ and $\widetilde{Y}$. If $U_{i,j}$ and $\widetilde{U}_{k,l}$ are the restrictions of $U$ to $I_{i,j}$, respectively of $\widetilde{U}$ to $\widetilde{I}_{k,l}$, it follows that $\widetilde{U}_{k,l}=U_{i,j}$ if $A(I_{i,j})=\widetilde{I}_{k,l}$, and we obtain
\begin{eqnarray*}
E\left(\widetilde{U}_{k,l} \vert \mathcal{F}_{\infty}^{\widetilde Y}\right)&=&\sum_{i,j\ge 1}E\left(1_{\{A(I_{i,j})=\widetilde{I}_{k,l}\}}{U}_{i,j}\vert \mathcal{F}_{\infty}^{\widetilde{Y}}\right)\\
&=&\sum_{i,j\ge 1}1_{\{A(I_{i,j})=\widetilde{I}_{k,l}\}} E\left({U}_{i,j}\vert \mathcal{F}_{\infty}^{{Y}}\right)\\
&=&\sum_{i,j\ge 1}1_{\{A(I_{i,j})=\widetilde{I}_{k,l}\}} E\left({U}_{i,j}\right)\\
&=&E\left({U}_{i,j}\right)\\
&=&0,
\end{eqnarray*}
since $\{A(I_{i,j})=\widetilde{I}_{k,l}\}$ is an event in $\mathcal{F}^Y_{\infty}= \mathcal{F}^{\widetilde{Y}}_{\infty}$. This shows that the random variables $\widetilde{U}_{k,l}$, $k,l\ge 1$ are random variables independent of $\widetilde{Y}$, and identically distributed with $P(U_{k,l}=\pm1)=\frac12$. A similar argument can be used to prove that $(\widetilde{U}_{k,l})_{k,l\ge 1}$ is a sequence of independent random variables, also independent of $\widetilde{Y}$, so $\widetilde{U}$ is indeed a sign choice for $\widetilde{Y}$ in the sense of Definition \ref{iid sign choice}. Applying the construction in the Appendix \ref{Appendix} to $\widetilde{Y}$ and $\widetilde{U}$ with $\alpha=\frac12$ it follows that there exists an enlargement $\widetilde{\mathbb{G}}$ of the filtration $\mathbb{F}^{\widetilde{Y}}$, satisfying the usual hypotheses, for which $(\widetilde{U}\widetilde{Y},\widetilde{\mathbb{G}},P,P^b)$ is a Brownian motion, and also a strong Markov process. The filtration $\mathbb{G}$ mentioned in the statement of the theorem and earlier in the proof is the filtration $\mathbb{G}=(\mathcal{G}_t)_{t\ge 0}$ defined by $\mathcal{G}_t=\widetilde{\mathcal{G}}_{A_t}$, $t\ge 0$, and it is easy to see that it is an  enlargement of the filtration $\mathbb{F}^Y=\mathbb{F}^B$ and it satisfies the usual conditions.

Since $A_t=\inf\{s\ge0:\alpha_s>t\}=\inf\{s>0: \int_0^s \frac{1}{\sigma^2(\widetilde{Y}_u)}du>t\}$ is a $\widetilde{\mathbb{G}}$-stopping time (recall that $\widetilde{\mathbb{G}}$ is an enlargement of the filtration  $\mathbb{F}^{\widetilde{Y}}$), and it satisfies $A_{t+s}=A_s+A_{t}\circ \theta_{A_s}$, by the strong Markov property of $\widetilde{Y}$ we obtain $E\left(\widetilde{Y}_{A_{t+s}}\vert \widetilde{\mathcal{G}}_{A_s}\right)=E^{\widetilde{Y}_{A_s}}\left( \widetilde{Y}_{A_t}\right)$, or equivalent $E\left(Y_{t+s}\vert {\mathcal{G}}_{s}\right)=E^{{Y}_{s}}\left( {Y}_{t}\right)$. $(Y,\mathbb{G})$ is therefore a Markov process, and it can be shown that its symmetric local time $\widehat{L}^0(Y)$ is also a $\mathbb{G}$-adapted Markov process. Since $Y$ is a solution of (\ref{aux3}), it follows that $\int_0^t\frac{1}{\vert\sigma(Y_s)\vert} dY_s=B_t-B_0+\int_0^t\frac{1}{\vert\sigma(Y_s)\vert} d\widehat{L}^Y_s=B_t-B_0+\frac{1}{\vert\sigma(0)\vert} \widehat{L}^Y_t$, or equivalent $B_t=B_0+\int_0^t\frac{1}{\vert\sigma(Y_s)\vert} dY_s-\frac{1}{\vert\sigma(0)\vert} \widehat{L}^0_t (Y)$, so $B$ is also a $\mathbb{G}$-adapted Markov process. We obtain $E(B_{t+s}\vert \mathcal{G}_s)=E^{B_s} B_t=B_s$, since starting at $B_0=x$, the distribution of $B_t$ is normal with mean $x$ (recall that $(B,\mathbb{F}^B)$ is a Brownian motion). $(B,\mathbb{G})$ is therefore a continuous local martingale with quadratic variation at time $t$ equal to $t$, and by L\'{e}vy's characterization of Brownian motion it follows that $(B,\mathbb{G})$ is a Brownian motion, concluding the proof of the earlier claim.

In order to show that $X=UY$ satisfies the SDE (\ref{SDE3}), note that by the Definition \ref{iid sign choice} of the i.i.d. sign choice $U$ we have $\mathrm{sgn}\left( X_{t}\right) =U_t$ (when $Y_t=0$ we have $U_t=1$, which coincides with $\mathrm{sgn}(X_t)=\mathrm{sgn}(0)=1$). Using this, the hypothesis that $\sigma $ is an odd function on $R^{\ast}$, and the fact that $Y\geq 0$ ($Y$ is a time-changed reflecting Brownian motion on $[0,\infty )$), we obtain
\[
\sigma \left( X_{t}\right) =U_{t}\left\vert \sigma \left( Y_{t}\right)
\right\vert 1_{\mathbb{R}^{\ast }}\left( X_{t}\right) +\sigma \left(
0\right) 1_{\left\{ 0\right\} }\left( X_{t}\right) ,\qquad t\geq 0.
\]

Since $\left\vert \sigma \right\vert $ is bounded between positive
constants, the process $Y$ (hence $X$) spends zero Lebesgue time at the
origin, thus we have almost surely%
\[
\int_{0}^{t}\sigma \left( X_{s}\right)
dB_{s}=\int_{0}^{t}U_{s}\left\vert \sigma \left( Y_{s}\right) \right\vert
1_{\mathbb{R}^{\ast }}\left( X_{s}\right) dB_{s},
\]%
for any $t\geq 0$. Using the fact that $Y$ is a solution of (\ref{equation for Y}), we obtain%
\begin{eqnarray*}
\int_{0}^{t}\sigma \left( X_{s}\right) dB_{s} &=&\int_{0}^{t}U_{s}\left\vert
\sigma \left( Y_{s}\right) \right\vert 1_{\mathbb{R}^{\ast }}\left( X_{s}\right)
dB_{s} \\
&=&\int_{0}^{t}U_{s}1_{\mathbb{R}^{\ast }}\left( X_{s}\right)
dY_{s}-\int_{0}^{t}U_{s}1_{\mathbb{R}^{\ast }}\left( X_{s}\right) d\widehat{L}_{s}^{0}\left(
Y\right) \\
&=&\int_{0}^{t}U_{s}1_{\mathbb{R}^{\ast }}\left( X_{s}\right) dY_{s},
\end{eqnarray*}%
where the last equality follows from the fact that the local time $\widehat{L}_{s}^{0}\left(
Y\right) $ of $Y$ at the origin increases only when $Y_{s}$ (hence $%
X_{s} $) is at the origin.

Proceeding exactly as in the proof of Theorem \ref{theorem 1} in the case $a=-b=1$
(see pp. \pageref{pagina de start referinta} -- \pageref{pagina de sfarsit referinta}, and note that $\sigma _{a,b}(U_{s})=\sigma _{1,-1}(U_{s})=U_{s}$ in this case), we obtain%
\[
\int_{0}^{t}\sigma \left( X_{s}\right) dB_{s}=\int_{0}^{t}U_{s}1_{\mathbb{R}^{\ast }}\left( X_{s}\right) dY_{s}=U_{t}Y_{t}=X_{t}, \qquad t\ge 0.
\]%

This, together with the previous part shows that $(UY,B,\mathbb{G})$ is a weak solution and also a $\vert x\vert$-strong solution of (\ref{SDE3}), concluding the proof of the third claim of the theorem.

%(\textbf{Proof of representation}) 
To prove the last claim, note that if $\left( X,B,\mathbb{F}\right) $ is a weak solution of (\ref{SDE3}), by the previous proof it follows that $Y=\left\vert X\right\vert$ is the pathwise unique strong solution of (\ref{equation for Y}). The process $X$ can thus be written in the form $X=U Y $, where $U=\mathrm{sgn}(X)$ is a $\mathbb{F}$-adapted process taking the values $\pm1$. Since $U Y=X$ is a continuous process,  it follows that $U$ is a sign choice for $Y$ in the sense of Definition \ref{sign choice}.

To prove that $U$ is an i.i.d. sign choice for $Y$, consider the ordered excursions intervals $(I_{i,j})_{i,j\geq 1}$ of $Y$ (as defined before Definition \ref{iid sign choice}),  and let $U_{i,j}$ be the restriction of $U$ to the interval $I_{i,j}$ (note that since $X=U Y$, the excursion intervals of $X$ and $Y$ are the same, so $U$ is constant during any such excursion interval). We have to show that $(U_{i,j})_{i,j\geq 1}$ is an i.i.d. sequence of random variables taking the values $\pm1$ with equal probability, and that it is also independent of $\mathbb{F}^Y$.

Since $(X,B,\mathbb{F})$ is a weak solution of (\ref{SDE3}) and $\vert \sigma\vert$ is by hypothesis bounded by positive constants, we have $lim_{t\rightarrow \infty} \langle X\rangle_t=\int_0^\infty \sigma^2(X_s) ds=\infty$, so by the time-change for martingales theorem (see \cite{Karatzas-Shreve}, Theorem 3.4.6) it follows that $X$ is a time-changed Brownian motion, more precisely $X_t=\widetilde{X}_{A_t}$, $t \ge 0$, where $(\widetilde{X},\widetilde{\mathbb{F}})$ is a Brownian motion, $A_t=\int_0^t \sigma^2(X_s) ds$, $t\ge 0$, and the filtration $\widetilde{\mathbb{F}}=(\widetilde{\mathcal{F}}_t)_{t\ge 0}$ is defined by $\widetilde{\mathcal{F}}_t={{\mathcal{F}}_{A_t^{-1}}}$. Also note that since $\sigma$ is an odd function on $\mathbb{R}^{\ast}$, we have $A_t=\int_0^t \sigma^2(\vert X_s\vert) ds$, which shows that the time change $(A_t)_{t\ge0}$ depends  only on the absolute value of $X$, and not on its sign.

Thanks to the symmetry of Brownian motion, $-\widetilde{X}$ and $\widetilde{X}$ have the same law, and since the time change $(A_t)_{t\ge 0}$ does not depend on the sign of $X$, the law of $X$ is the same as the law of $-X$. Alternatively, this can be seen from the fact that the solution of $(\ref{SDE3})$ is unique in the sense of probability law (Theorem 5.5.7 of \cite{Karatzas-Shreve}), and that $X$ and $-X$ are both solutions of (\ref{SDE3}).

Since $X$ is a time changed Brownian motion, it can be shown that its excursion process $(e_t^X)_{t>0}$ is a $(\mathcal{F}_{\tau_t})$-Poisson process (the proof is similar to that for standard Brownian motion, see pp. 448 -- 457 of \cite{Revuz and Yor}; see also Appendix \ref{Appendix} for the precise definitions and the construction of the excursion process in the case of Brownian motion). If $n$ is the characteristic measure of the excursion process $e^X$ of $X$, the fact that $X$ and $-X$ have the same law implies that the positive and negative excursions of $X$ are distributed the same under $n$, and in particular $n_+(R>x)=n_-(R>x)$, for any $x>0$, where $n_+$ and $n_-$ denote the restrictions of $n$ to the set of the positive excursions, respectively to the set of the negative excursions of $X$.

We can now proceed as in the corresponding part of the proof of Theorem \ref{theorem 1} in the case $a=-b=1$ (see p. \pageref{N_1 and N_2}). Consider $Q(x)=N_1(x)-N_2(x)$, where $N_1(x) = N_{L_1}^{(x^{-2},\infty)\cap\mathcal{U}_\delta^+}$ and $N_2(x)=N_{L_1}^{(x^{-2},\infty)\cap\mathcal{U}_\delta^-}$ represent the number of positive, respectively negative excursions of $X$ starting before time $1$, with lengths greater than $x^{-2}$. A similar proof shows that $N_1(x)$ and $N_2(x)$ are independent Poisson processes with the same intensity $\lambda=n_+(R>x^{-2})=n_-(R>x^{-2})$, so $Q(x)$ is a compound Poisson process with jump sizes $\pm1$. Also, if $N(x)=N_1(x)+N_2(x)$ represents the total number of jumps of $(Q(y))_{0\leq y \leq x}$ and $\widetilde{U}_1, \widetilde{U}_2,\ldots$ are the successive jumps of $Q$, then
\begin{equation}
Q(x)=\sum_{j=1}^{N(x)} \widetilde{U}_j, \qquad x>0,
\end{equation}
where $N(x)$ is a Poisson process with intensity $2\lambda$ and  $\big(\widetilde{U}_j\big)_{j\geq1}$ is an i.i.d. sequence of random variables taking the values $\pm1$ with probability $\frac12$.

Recall that we defined $U_{i,j}$ as the sign of $X$ during the $j^\text{th}$ longest excursion of $Y$ starting before time $1$. Since $X=U Y$, the interval excursions of $X$ and $Y$ are the same. Comparing with the above, and since $N(\infty)=N_{L_1}^{(0,\infty)}=\infty$, we conclude that $U_{1,j}=\widetilde{U}_j$ for all $j\geq 1$, so $(U_{1,j})_{j\geq1}$ is an i.i.d. sequence of random variables taking the values $\pm 1$ with equal probability. The fact that the whole sequence $(U_{i,j})_{i,j \geq 1}$ has the same properties being similar to that in the proof of Theorem \ref{theorem 1}, we omit it.

To conclude the proof, we have left to show that the sequence $(U_{i,j})_{i,j \geq 1}$ is also independent of $Y$.

We have seen that $X$ is a time change of a Brownian motion $\widetilde{X}$, more precisely $X_t=\widetilde{X}_{A_t}$, where $A_t=\int_0^t \sigma^2(\vert X_s\vert) ds$. In the proof of Theorem \ref{theorem 1} (p. \ref{pagina de referinta start descompuere Ito-McKean}) we showed that the sequence of signs of excursions of a skew Brownian motion with parameter $\alpha\in [0,1]$ is independent on its absolute value. Applying this to $\widetilde{X}$ (Brownian motion, which is the same as a skew Brownian motion with parameter $\alpha=\frac12$), it follows that the sequence $(\widetilde{V}_{i,j})_{i,j\ge 1}$ is independent of $\vert \widetilde{X} \vert$, where $\widetilde{V}_{i,j}$ is the sign on $\widetilde{X}$ during the excursion interval $\widetilde{I}_{i,j}$, and $(\widetilde{I}_{i,j})_{i,j\ge 1}$ are the ordered excursion intervals of $\widetilde{X}$, as defined on pp. \pageref{sign choice} - \pageref{iid sign choice}. Recall that $U_{i,j}$ is the sign of $U$ (which is the same as the sign of $X$) during the excursion interval $I_{i,j}$ of $Y$. From the double representation of $X_t=U_t Y_t=\widetilde{X}_{A_t}$, where $A_t=\int_0^t \sigma^2(\vert X_s\vert) ds=\int_0^t \sigma^2(Y_s) ds$, $t\ge 0$, exactly as in the proof on p. \ref{pagina de referinta start corespondenta A}, it can be seen that there is a bijective correspondence  between the excursion intervals of $\widetilde{X}$ and of $Y$, defined by $A(I_{i,j})=\widetilde{I}_{k,l}$ if $I_{i,j}=(g_{i,j},d_{i,j})$, $\widetilde{I}_{k,l}=(\widetilde{g}_{k,l}, \widetilde{d}_{k,l})$, $A_{g_{i,j}}=\widetilde{g}_{k,l}$, and $A_{d_{i,j}}=\widetilde{g}_{k,l}$. Note that if $A(I_{i,j})=\widetilde{I}_{k,l}$, then $U_{i,j}=\widetilde{V}_{k,l}$, since $U_{i,j}=\mathrm{sgn}(X_t)$ for $t\in I_{i,j}$ and $\widetilde{V}_{k,l}=\mathrm{sgn}(\widetilde{X}_t)$ for $t\in \widetilde{I}_{k,l}$, which is the same as $\mathrm{sgn}(\widetilde{X}_{A_t})=\mathrm{sgn}(X_t)$ for $t \in I_{i,j}$. We obtain therefore the following representation
\begin{equation}\label{representation of U_i,j}
U_{i,j}=\sum_{k,l\ge 1}\widetilde{V}_{k,l}1_{\{A(I_{i,j})=\widetilde{I}_{k,l}\}}, \qquad i,j\ge 1.
\end{equation}

We pause for a moment to show that $\mathcal{F}^{\vert\widetilde{X}\vert}_\infty=\mathcal{F}^Y_\infty$. Since $A_t^{-1}=\inf\{s\ge0:\int_0^t \sigma^2(Y_s) ds>t\}$ is a $\mathcal{F}^Y_\infty$- measurable random variable, the representation $\vert\widetilde{X}_t\vert=Y_{A_t^{-1}}$ shows that $\mathcal{F}^{\vert\widetilde{X}\vert}_\infty\subset\mathcal{F}^Y_\infty$. Conversely, noting that $A_t=\int_0^t\frac{1}{\sigma^2(\vert\widetilde{X}_s\vert)} ds$ is $\mathcal{F}^{\vert\widetilde{X}\vert}_\infty$-measurable, the representation $Y_t=\vert\widetilde{X}_{A_t}\vert$ shows that we also have $\mathcal{F}^Y_\infty\subset\mathcal{F}^{\widetilde{X}}_\infty$, so $\mathcal{F}^{\widetilde{X}}_\infty=\mathcal{F}^Y_\infty$ as claimed.

Using the independence of $(\widetilde{V}_{i,j})_{i,j\ge 1}$ and $\vert\widetilde{X}\vert$, the fact that the events $\{A(I_{i,j})=\widetilde{I}_{k,l}\}$ are in $\mathcal{F}_\infty^Y=\mathcal{F}^{\vert\widetilde{X}\vert}_{\infty}$ for all $i,j,k,l\ge 1$, and (\ref{representation of U_i,j}), we obtain
\begin{eqnarray*}
&&P\left(U_{i_m,j_m}=u_m, m=\overline{1,n}\big\vert \mathcal{F}_{\infty}^{Y}\right)=\\
&=&\sum_{k_m,l_m\ge 1, m=\overline{1,n}}P\left(\widetilde{V}_{k_m,l_m}=u_m, A(I_{i_m,j_m})=\widetilde{I}_{k_m,l_m}, m=\overline{1,n} \big\vert \mathcal{F}_{\infty}^{{Y}}\right)\\
&=&\sum_{k_m,l_m\ge 1, m=\overline{1,n}}1_{\{A(I_{i_m,j_m})=\widetilde{I}_{k_m,l_m}, m=\overline{1,n}\}} P\left(\widetilde{V}_{k_m,l_m}=u_m, m=\overline{1,n} \big\vert \mathcal{F}_{\infty}^{\vert\widetilde{X}\vert}\right)\\
&=&\sum_{k_m,l_m\ge 1, m=\overline{1,n}}1_{\{A(I_{i_m,j_m})=\widetilde{I}_{k_m,l_m}, m=\overline{1,n}\}} P\left(\widetilde{V}_{k_m,l_m}=u_m, m=\overline{1,n}\right)\\
&=&P\left(U_{i_m,j_m}=u_m, m=\overline{1,n}\right),
\end{eqnarray*}
for any $n\ge 1$, $u_1,\ldots,u_n\in\{-1,1\}$, and distinct $(i_1,j_1),\ldots,(i_n,j_n)$, which shows that the sequence $(U_{i,j})_{i,j\ge 1}$ is independent of $Y$. This, together with the previous part shows that $U$ is an i.i.d. sign choice for $Y$, concluding the proof of the theorem.
\end{proof}

We conclude with some remarks on the possibility of extending the previous
theorem by removing the two additional hypotheses: $\sigma $ is an odd
function on $\mathbb{R}^{\ast }$ and $x\sigma \left( x\right) \geq 0$ for
all $x\in \mathbb{R}$.

If $\sigma $ satisfies Le Gall's condition (\ref{Nakao condition}), then $%
\sigma $ has countably many discontinuities. If the set of discontinuities
do not have a limit point, then one can use a stopping time argument to
reduce the problem to the case when $\sigma $ has just one jump discontinuity, which,
without loss of generality may be assumed to be the origin. Either $\sigma $ has
the same sign on both sides of the origin (in which case the result of Le Gall
applies), or $\sigma $ has different signs on the left and on the right of
the origin. This shows that the additional hypothesis $x\sigma \left( x\right)
\geq 0$ for $x\in \mathbb{R}$ is not essential.

The condition that $\sigma $ is an odd function on $\mathbb{R}^{\ast }$ (if $%
\sigma \left( 0\right) =0$ then the solution of (\ref{SDE3}) is not even
weakly unique, so we avoided this case) does not seem to be essential for
the result, but it is a key element of the proof. It may be possible to
remove this hypothesis, using the following idea.

If $(X,B,\mathbb{F})$ is a weak solution of (\ref{SDE3}), from It\^{o}-Tanaka formula we obtain%
\begin{equation}\label{aux4}
\left\vert X_{t}\right\vert =\int_{0}^{t}\left\vert \sigma \left(
X_{s}\right) \right\vert dB_{s}+\widehat{L}_{t}^{0}\left( \vert X\vert \right) ,\qquad t\geq 0,
\end{equation}
where $\widehat{L}^0(X)$ is the symmetric local time of $X$ at the origin.

If $\sigma$ is a measurable function on $\mathbb{R}$ which satisfies condition (\ref{condition on sigma}), and $\vert\sigma\vert$ is bounded by positive constants, by Theorem 4.1 in \cite{Bass-Chen} the SDE
\begin{equation}\label{aux5}
Y_{t} =\int_{0}^{t}\left\vert \sigma \left(
Y_{s}\right) \right\vert dB_{s}+\widehat{L}_{t}^{0}\left( Y \right) ,\qquad t\geq 0,
\end{equation}
has a continuous strong solution which is pathwise unique, and the same is true for the SDE
\begin{equation}\label{aux6}
Y_{t} =-\int_{0}^{t}\left\vert \sigma \left(
Y_{s}\right) \right\vert dB_{s}-\widehat{L}_{t}^{0}\left( Y \right) ,\qquad t\geq 0.
\end{equation}

If $Y^1$ and $Y^2$ denote the corresponding (pathwise unique) solutions of (\ref{aux5}), respectively (\ref{aux6}), from Theorem 3.3 in \cite{Bass-Chen} it follows that $Y^1\ge 0\ge Y^2$ a.s. Comparing (\ref{aux4}) with (\ref{aux5}) and (\ref{aux6}), it follows that (\ref{aux4}) has a pathwise unique non-negative solution ($X=Y^1\ge0$), and a
pathwise unique non-positive solution ($X=Y^{2}\le0$). The ``generic'' solution of (\ref{aux4}) (and hence of (\ref{SDE3}%
)) should therefore be a ``mixture'' of $Y^1$ and $Y^2$, constructed by choosing with certain probabilities (depending on $\sigma(0+)$ and $\sigma(0-)$, as in Theorem \ref{theorem 1} and Theorem \ref{theorem 2}) between the excursions away from zero of $Y^1$ and $Y^2$, and patching them together.

%Constructed in this way, a solution of (\ref{aux4}) should be a strong solution of the SDE%
%\[
%U_{t}Y_{t}=\int_{0}^{t}\left\vert \sigma \left( Y_{s}\right) \right\vert
%dB_{s}+L_{t}^{0}\left( Y\right) ,
%\]%
%where $U_{t}$ is a given sign choice for $Y_{t}$, and $U_{t}Y_{t}\geq 0$ for all $t\geq 0$.

However, we were not able to implement this idea in order to obtain a proof of a general theorem similar to Theorem \ref{theorem 2}, without the additional hypothesis that the diffusion coefficient $\sigma $ is an odd function on $\mathbb{R}^{\ast }$.
\vspace{1.2cm}

{\bf Acknowledgements.} This work was supported by a grant of the Romanian National Authority for Scientific Research, CNCS -- UEFISCDI, project number PNII-ID-PCCE-2011-2-0015, and in part by the contract SOP HRD/89/1.5/S/62988.

Last but not least, I dedicate this paper to my dear children, {\newline\centerline{Nicolae, Ana, and \c{S}tefan.}}

\vspace{1cm}

\appendix
\section{Construction of skew Brownian motion from the excursion process of a reflecting Brownian motion and an i.i.d. sign choice}\label{Appendix}

In \cite{Ito}, K. It\^{o} showed how to construct the excursion point process of a standard Markov process at a recurrent point of the state space, and he derived the properties of the characteristic measure of the excursion process (Theorem 6.4 in \cite{Ito}). He also suggested (without a proof) that converse is also true: starting with a Poisson point process with characteristic measure satisfying these conditions, one can construct a process whose excursion point process coincides with the initial Poisson point process. Salisbury (\cite{Salisbury1}) showed that in fact It\^{o}'s conditions are not sufficient for the converse, and he found stronger conditions under which the converse is true. His results apply to right-continuous strong Markov processes (for right processes and Ray processes, see \cite{Salisbury2}), and in particular they apply to skew Brownian motion with parameter $\alpha\in [0,1]$ (the cases $\alpha=\frac12$ and $\alpha=1$ correspond to standard Brownian motion, respectively reflecting Brownian motion).

We will use Salisbury's result (Theorem 2 in \cite{Salisbury1}) to give a construction of skew Brownian motion starting from a given reflecting Brownian motion and an i.i.d. sign choice in the sense of Definition \ref{iid sign choice}.

We start by briefly reviewing some elements of It\^{o}'s description of Brownian motion in terms of the Poisson point process of excursions (for more details see \cite{Ito}, or Chapter XII in \cite{Revuz and Yor}).

Consider the standard $1$-dimensional Brownian motion $(W,\mathbb{F}^W)$ on the canonical Wiener space $(C([0,\infty)),\mathcal{B}(C([0,\infty))),P)$. Define the space of excursions by $$U=\{\omega\in C([0,\infty)): 0<R(\omega)<\infty \text{ and } \omega(t)=0 \text{ for } t\ge R(\omega)\},$$
where $R(\omega)=\inf\{t>0: \omega(t)=0\}$, and let $\mathcal{U}$ is the $\sigma$-algebra generated by the coordinate mappings of functions in $U$. Consider $\delta\equiv 0 \in C([0,\infty))$, and let $U_\delta=U\cup\delta$ and $\mathcal{U}_\delta=\sigma (\mathcal{U},{\delta})$.

We define the  \emph{excursion} $e^W_t\in U$ of $W$ at time $t$ by
\begin{equation}\label{construction of excursions}
e_t^W(\omega)(s) = \left\{
\begin{tabular}{ll}
$1_{\{0\le s \le \tau_t(\omega)-\tau_{t-}(\omega)\}} W_{\tau_{t-}(\omega)+s}(\omega),$ & if $\tau_t(\omega)-\tau_{t-}(\omega)>0$ \\
$\delta(\omega)(s)\equiv 0,$ & if $\tau_t(\omega)-\tau_{t-}(\omega)=0$%
\end{tabular}%
\right.,
\end{equation}
where $\tau_t=\inf\{s>0: L_s^0(W)>t\}$, $\tau_{t-}=\inf\{s>0: L_s^0(W)\ge t\}$, and $L_t^0(W)$ is the local time of $W$ at the origin. The \emph{excursion process} of $W$ is the process $e^W=(e^W_t)_{t>0}$ defined on $(C([0,\infty)),\mathcal{B}(C([0,\infty))))$ with values in $(U_\delta,\mathcal{U}_\delta)$.

It is known that $e^W$ is a $(\mathcal{F}^W_{\tau_t})_{t>0}$-Poisson point process, and its characteristic measure $n$ satisfies $n(R>x)=\sqrt{\frac{2}{\pi x}}$, $x>0$. This, together with the law of $u(t)$, $t<R$, conditionally on the value taken by $R$ (see \cite{Revuz and Yor}, Section XII.4) characterizes $n$ uniquely. Denoting by $n_+(\Gamma)=n(\Gamma\cap \mathcal{U}_\delta^+)$ and $n_-(\Gamma)=n(\Gamma\cap \mathcal{U}_\delta^+)$ the restrictions of $n$ to the $\sigma$-algebras $\mathcal{U}_\delta^+$, $\mathcal{U}_\delta^-$, generated by the coordinate mappings of positive excursions ($U_\delta^+$), respectively negative excursions ($U_\delta^-$), we also have $n_+(R>x)=n_-(R>x)=\frac12 n(R>x)$.

Starting from the Brownian motion $W$, we constructed the excursion process $e^W$ (formula (\ref{construction of excursions})). Reversing the construction, that is starting from $e^W$, we may recover $W$ as follows
\begin{equation}\label{construction of process from excursions}
W_t(\omega)=\sum_{s\le L_t(\omega)} e_s(t-\tau_{s-}(\omega)),\qquad t\ge 0,
\end{equation}
where $\tau_t(\omega)=\sum_{s\le t} R(e_s(\omega))$, $\tau_{t-}(\omega)=\sum_{s<t}R(e_s(\omega)$, and $L_t(\omega)=\inf\{s>0: \tau_s(\omega)>t\}$. It is not immediate that in its natural filtration (or the augmented natural filtration), the reconstructed process is a strong Markov process. However, by the results in \cite{Salisbury1}, there exists an enlargement of the initial natural filtration $\mathbb{F}^W$ of $W$ under which the reconstructed process is a strong Markov process.

Define now, for $\alpha\in[0,1]$ arbitrarily fixed, $n_\alpha=\alpha n_+ + (1-\alpha)n_-$, and consider the Poisson point process $e^W$ under this new characteristic measure. As shown in \cite{Salisbury2} (Example 5.7), $n_\alpha$ satisfies the required hypotheses of Theorem $4.1$ (or Theorem $4.2$ in \cite{Salisbury1}), hence there exists a strong Markov process $(X^\alpha,\mathbb{G},P,P^b)$ for which the excursion process $\vert X^\alpha\vert$ is the same as the excursion process of $\vert W\vert $ (the excursions of $X^\alpha$ and $W$ agree modulo their signs), and which is a recurrent extension of $P_0^b$ (the law of Brownian motion starting at $b$ and absorbed at the origin). Also, the process $X^\alpha$ can be identified with the \emph{skew Brownian motion with parameter $\alpha$}, and $P^b$ is the law of $X^\alpha$ starting at $b$.

The above can be taken as a definition of skew Brownian motion with parameter $\alpha$, but there are other, equivalent definitions: in terms of Brownian excursions with randomized signs (\cite{Ito-McKean}), as a solution of stochastic equation or as a limit of random walks (\cite{Harrison-Shepp}), in terms of the speed and scale measure (\cite{Walsh}), etc. It\^{o}-McKean's construction of skew Brownian motion (\cite{Ito-McKean}, pp. 75 -- 78 and p. 115) is perhaps the most intuitive: if we change the sign of each excursions of a reflecting Brownian motion independently with probability $1-\alpha$, the resulting process is a skew Brownian motion with parameter $\alpha\in[0,1]$. In order to make this construction precise we have to label the countable set of Brownian excursions (or equivalent, the set of the corresponding excursion intervals) in a measurable way which depends only on the set of times when the Brownian motion is at the origin, and there are several possibilities to do this. For example, It\^{o} and McKean (\cite{Ito-McKean}) consider  $I_n$ ($n= 1,2,\ldots$) to be the excursion interval (open connected component of $[0,\infty]-\{t\ge 0:W_t=0\}$) which contains the first number in the sequence $1,\frac12,\frac32,2,\frac14,\frac34,\frac54,\frac74,\frac94, \frac{11}4,3,\ldots$, which is not included in $\{t\ge 0:W_t=0\}$ or in one of the intervals $I_1,\ldots,I_{n-1}$ already labeled. Blumenthal (\cite{Blumenthal}, pp. 113 -- 118) considers $I_{n,k}$ ($n,k= 1,2,\ldots$) to be the $k^\text{th}$ leftmost interval with length in $(\frac{1}{n},\frac1{n-1}]$ (when $n=1$, we use the convention $\frac10=\infty$). On pp. \pageref{sign choice} -- \pageref{iid sign choice} we considered is $I_{n,k}$ ($n,k= 1,2,\ldots$) to be the $k^\text{th}$ (leftmost) longest excursion interval of $W$ contained in $(\xi_{n-1},\xi_n)$ and different from $I_{n,1},\ldots,I_{n,k-1}$, where $\xi_0=0$ and $\xi_{n+1}=\inf\{t>\xi_n+1: W_t=0\}$. As pointed out in \cite{Blumenthal}, the actual numbering of the excursions intervals is not important, as long as it depends only on the zero set $Z^W$ of $W$ in a measurable way (to be precise, this means that the random variables representing the left and right endpoints of the numbered interval excursions are $\mathcal{Z}^W$- measurable random variables, where $\mathcal{Z}^W=\sigma(l=l(\omega),r=r(\omega): (l,r) \text{ is an interval excursion of } W)$ is the $\sigma$-algebra generated by the left and right endpoints of excursion intervals of $W$). We will return to this shortly, but for now we will follow It\^{o}-McKean's choice $(I_n)_{n\ge 1}$ of labeling the excursion intervals of $W$.

Consider the corresponding numbering $(e_{n})_{n\ge 1}$ of the excursions of $W$ away from the origin, defined by
\begin{equation}
e_{n}(t)=1_{\{0\le t \le d_n-g_n\}} W_{t+g_{n}}, \qquad t\ge 0, n=1,2,\ldots
\end{equation}
where $I_{n}=(g_{n},d_{n})$, and consider the i.i.d. sequence of random variables $(V_{n})_{n\ge 1}$ with $P(V_{n}=1)=1-P(V_{n})=\alpha\in[0,1]$, which is also assumed to be independent of $W$.

If $e^W=(e^W_t)_{t>0}$ is the $(\mathcal{F}_{\tau_t})_{t>0}$-Poisson point process of excursions of $W$, and we define $\tilde{e}=(\tilde{e}_t)_{t>0}$ by $\tilde{e}_t=V_{n}\vert e_n\vert$ if $e_t^W=e_{n}$, and $\tilde{e}_t=\delta$ otherwise, then $\tilde{e}$ is a $(\tilde{\mathcal{F}}_{ \tau_t})_{t>0}$-Poisson point process, where $\tilde{\mathcal{F}}_t=\mathcal{F}_t \vee \sigma\{V_{n} : d_n<t \}$, and the process $X^\alpha$ reconstructed  from it as in (\ref{construction of process from excursions}) is skew Brownian motion with parameter $\alpha$ (see \cite{Ito-McKean}). Also, by Theorem 4.2 in \cite{Salisbury1} (see also Example 5.7 in \cite{Salisbury2}), there exists a right-continuous complete filtration $\mathbb G$ for which $(X^\alpha,\mathbb G, P, P^b)$ is a strong Markov process.

Note that the above It\^{o}-McKean construction of skew Brownian motion with parameter $\alpha\in[0,1]$ uses in fact only a reflecting Brownian motion $Y=\vert W\vert$, the numbering $(I_n)_{n\ge 1}$ of the interval excursions of $W$ described above (which is the same as the numbering of the excursions intervals of $Y=\vert W\vert$), and an i.i.d sequence of random variables $(V_n)_{n\ge 1}$ with  $P(V_{n}=1)=1-P(V_{n})=\alpha\in[0,1]$, which is also independent of $W$. We conclude with the following lemma, which shows that the numbering of the excursion intervals of Brownian motion $W$ (or equivalent of $Y=\vert B\vert$) is unimportant, so in the above construction of the skew Brownian motion we can consider the numbering of excursions introduced on pp. \pageref{sign choice} -- \pageref{iid sign choice}, that is we can construct the skew Brownian motion starting from a reflecting Brownian motion $Y$ and an i.i.d. sign choice for $Y$ in the sense of Definition \ref{iid sign choice}

\begin{lemma}\label{equivalence of excursion numberings}
If $U$ is an i.i.d. sign choice for a reflecting Brownian motion $Y$, which takes the value $1$ with probability $\alpha\in[0,1]$, and $(I_{i,j})_{i,j\ge 1}$, $(U_{i,j})_{i,j\ge 1}$ are the corresponding interval excursions and their signs in Definition \ref{iid sign choice}, there exists a Brownian motion $W$ with $\vert W\vert =Y$, for which the sequence $(V_n)_{n\ge 1}$ defined by $V_n=U_{i,j}$ if $I_n=I_{i,j}$ ($(I_n)_{n\ge 1}$ is It\^{o}-McKean's numbering of the interval excursions of $W$, as introduced above) is an i.i.d. sequence of random variables with $P(V_n=1)=1-P(V_n=-1)=\alpha$, $n=1,2\ldots$, also independent of $W$.

Conversely, if $(I_n)_{n\ge 1}$ is It\^{o}-McKean's numbering of the excursion intervals of the Brownian motion $W$ as above, and if $(V_n)_{n\ge 1}$ is an i.i.d. sequence of random variables with $P(V_n=1)=1-P(V_n=-1)=\alpha$, $n=1,2\ldots$, also independent of $W$, then the process $U$ defined by $U_t=V_n$ if $t\in V_n$ and $U_t=1$ otherwise, is an i.i.d. sign choice for the reflecting Brownian motion $Y=\vert W\vert$, which takes the value $1$ with probability $\alpha$, in the sense of Definition \ref{iid sign choice}.
\end{lemma}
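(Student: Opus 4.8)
The plan is to reduce both implications to a single measure-theoretic fact together with the observation that the two numberings of excursion intervals differ only by a bijection that is measurable with respect to the zero set of $Y$. The excursion intervals of $W$, of $Y=|W|$, and of the resulting signed process all coincide, and both the numbering $(I_{i,j})$ introduced before Definition \ref{iid sign choice} and It\^{o}--McKean's numbering $(I_n)$ are measurable functions of the zero set $Z^Y$ alone: for the former the stopping times $\xi_i$ and the order $\preceq$ (by length, then by left endpoint) depend only on the excursion intervals of $Y$, while for the latter this is recalled above. Since $Y$ is a reflecting Brownian motion started at $0$, a.s. there are infinitely many excursions and every $I_{i,j}$ is non-empty, so we obtain an a.s. bijection $\pi$ of the two index sets determined by $I_{\pi(n)}=I_n$, and $\pi$ is $\mathcal{Z}^Y$-measurable, i.e. a function of $Y$.

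The measure-theoretic fact I would isolate is the following: if $(\eta_k)_{k\ge1}$ is i.i.d. with $P(\eta_k=1)=\alpha$ and independent of $Y$, and $\rho$ is a $\sigma(Y)$-measurable bijection of the index set, then $(\eta_{\rho(k)})_{k\ge1}$ is again i.i.d. with the same marginal and independent of $Y$. The point is to condition on $Y$: this freezes $\rho$ to a deterministic permutation applied to an i.i.d. sequence, so the conditional law of $(\eta_{\rho(k)})$ given $Y$ is i.i.d. with marginal $\alpha$ and, crucially, does not depend on $Y$. Writing $\mu$ for this limiting i.i.d. law, the identity $E[g(\eta_{\rho(\cdot)})h(Y)]=E[\,E(g(\eta_{\rho(\cdot)})\mid Y)\,h(Y)]=\mu(g)E[h(Y)]$ then yields both the distributional claim and independence of $Y$.

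The converse direction follows almost at once from this fact. Given It\^{o}--McKean's numbering $(I_n)$ of the excursion intervals of $W$ and signs $(V_n)$ that are i.i.d. with $P(V_n=1)=\alpha$ and independent of $W$ (hence of $Y=|W|$), the process $U$ is a sign choice in the sense of Definition \ref{sign choice}, since $U$ is constant on each excursion interval and $U_tY_t=0$ whenever $Y_t=0$, so $UY$ is continuous by the same argument used in the proofs of the theorems. In the paper's numbering the signs are $U_{i,j}=V_{\pi^{-1}(i,j)}$, a $Y$-measurable reindexing of $(V_n)$, so by the auxiliary fact $(U_{i,j})$ is i.i.d. with $P(U_{i,j}=1)=\alpha$ and independent of $Y$, which is exactly what Definition \ref{iid sign choice} requires.

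For the forward direction the only additional ingredient is the construction of $W$, which I would obtain by enlarging the probability space with an auxiliary i.i.d. sequence $(S_n)_{n\ge1}$, $P(S_n=1)=\tfrac12$, taken independent of the pair $(Y,U)$, attaching the signs $(S_n)$ to the excursions of $Y$ in It\^{o}--McKean's numbering and setting $W=0$ on $Z^Y$; by the construction recalled above with parameter $\tfrac12$ this $W$ is a Brownian motion with $|W|=Y$, and since $(I_n)$ depends only on $Y$, the reindexed sequence $(V_n)=(U_{\pi(n)})$ is unaffected by $(S_n)$. To see that $(V_n)$ is i.i.d. with marginal $\alpha$ and independent of $W$, observe that conditionally on $Y$ the sequences $(V_n)$ and $(S_n)$ are independent and $W$ is a function of $(Y,(S_n))$, so $E[g(V)h(W)]=E[\,E(g(V)\mid Y)\,E(h(W)\mid Y)]=\mu(g)E[h(W)]$. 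The step I expect to be the main obstacle is precisely the justification that reindexing by the $Y$-measurable bijection preserves the i.i.d. property and independence, where one must condition on $Y$ to freeze the permutation and check that the conditional law is free of $Y$; the $\mathcal{Z}^Y$-measurability of both numberings is what legitimates this conditioning, and the fresh signs $(S_n)$ are what upgrade independence of $Y$ to the required independence of $W$.
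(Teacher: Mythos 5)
Your proposal is correct and follows essentially the same route as the paper: you build $W$ from $Y$ by attaching fresh symmetric i.i.d.\ signs independent of $(Y,U)$, observe that both numberings are measurable functions of the zero set alone so the change of numbering is a $Y$-measurable bijection of index sets, and then condition on $Y$ to freeze that bijection and transfer the i.i.d.\ property and independence — which is exactly what the paper's explicit computation of $E\big(V_n \,\vert\, \mathcal{F}^W_\infty\big)$ via the indicator sums $\sum_{i,j} 1_{\{I_{i,j}=I_n\}}$ and the tower property through $\mathcal{F}^Y_\infty\vee\sigma(\widetilde V_n)$ accomplishes. Your packaging of the key step as an abstract ``reindexing by a $Y$-measurable permutation preserves i.i.d.\ and independence of $Y$'' lemma is a tidy reorganization, not a different argument.
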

\begin{proof}
Consider $W$ to be the Brownian motion constructed using It\^{o}-McKean's construction above from the excursion process of $Y$ and i.i.d. sequence of signs $\big(\widetilde{V}_n\big)_{n\ge 1}$ with $P(\widetilde{V}_n=1)=1-P(\widetilde{V}_n=-1)=\frac12$, also independent of $Y$ and $(U_{i,j})_{i,j\ge 1}$. In particular $Y=\vert W\vert$, and $\mathcal{F}^W_\infty\subset \mathcal{F}^Y_\infty\vee \sigma(\widetilde{V}_n:n\ge 1)$.

Since $I_n$ and $I_{i,j}$ are $\mathcal{Z}^W=\mathcal{Z}^Y$-measurable random variables for any $n,i,j\ge 1$, $\sigma(\widetilde{V}_n:n\ge 1)$ and $\mathcal{F}^Y_\infty \vee \sigma(U_{i,j}:i,j\ge 1)$ are independent, and $\sigma(U_{i,j}:i,j\ge 1)$ and $\mathcal{F}^Y_\infty$ are also independent, we obtain
\begin{eqnarray*}
E(V_n \vert \mathcal{F}^W_\infty)&=&\sum_{i,j\ge 1} E\left(U_{i,j} 1_{\{I_{i,j}=I_{n}\}}\big\vert \mathcal{F}^W_\infty\right)\\
&=&\sum_{i,j\ge 1} 1_{\{I_{i,j}=I_{n}\}} E\left(E\left(U_{i,j} \big\vert \mathbb{F}^Y_\infty \cup \sigma(\widetilde{V}_n:n\ge 1)\right) \Big\vert \mathbb{F}^W_\infty\right)\\
&=&\sum_{i,j\ge 1} 1_{\{I_{i,j}=I_{n}\}} E\left(E\big(U_{i,j} \vert \mathbb{F}^Y_\infty \big) \big\vert \mathbb{F}^W_\infty\right)\\
&=&\sum_{i,j\ge 1} 1_{\{I_{i,j}=I_{n}\}} E\left(E\big(U_{i,j}\big) \big\vert \mathbb{F}^W_\infty\right)\\
&=&\sum_{i,j\ge 1} 1_{\{I_{i,j}=I_{n}\}} E\big(U_{i,j}\big)\\
&=&(2\alpha-1)\sum_{i,j\ge 1} 1_{\{I_{i,j}=I_{n}\}} \\
&=&2\alpha -1,
\end{eqnarray*}
which shows that ${V}_n$ is independent of $W$ and that it has the correct distribution.

Similarly, for any $k\ge 1$, $1\le n_1<\ldots <n_k$ and  $u_1,\ldots,u_k\in\{-1,1\}$ we have
\begin{eqnarray*}
&&P\left(V_{n_1}=u_1,\ldots,V_{n_k}=u_k\vert \mathcal{F}^W_\infty\right)=E\left( 1_{\{V_{n_1}=u_1,\ldots,V_{n_k}=u_k\}} \big\vert \mathcal{F}^W_\infty\right)\\
&=&\sum_{\substack{\text{distinct}\\(i_1,j_1),\ldots,(i_xxk,j_k)}} 1_{\{I_{i_1,j_1}=I_{n_1},\ldots, I_{i_k,j_k}=I_{n_k}\}}\cdot\\
&&\phantom{\sum_{\substack{\text{distinct}\\(i_1,j_1),\ldots,(i_k,j_k)}}x}\cdot E\left(E\left(1_{\{U_{i_1,j_1}=u_1,\ldots,U_{i_k,j_k}=u_k\}} \big\vert \mathbb{F}^Y_\infty \cup \sigma(\widetilde{V}_n:n\ge 1)\right) \Big\vert\mathbb{F}^W_\infty\right)\\
&=&\sum_{\substack{\text{distinct}\\(i_1,j_1),\ldots,(i_k,j_k)}} 1_{\{I_{i_1,j_1}=I_{n_1},\ldots, I_{i_k,j_k}=I_{n_k}\}} E\left(E\big(1_{\{U_{i_1,j_1}=u_1,\ldots,U_{i_k,j_k}=u_k\}} \big\vert \mathbb{F}^Y_\infty\big) \Big\vert\mathbb{F}^W_\infty\right)\\
&=&\sum_{\substack{\text{distinct}\\(i_1,j_1),\ldots,(i_k,j_k)}} 1_{\{I_{i_1,j_1}=I_{n_1},\ldots, I_{i_k,j_k}=I_{n_k}\}} E\left(E\big(1_{\{U_{i_1,j_1}=u_1,\ldots,U_{i_k,j_k}=u_k\}} \big) \Big\vert\mathbb{F}^W_\infty\right)\\
&=&\sum_{\substack{\text{distinct}\\(i_1,j_1),\ldots,(i_k,j_k)}} 1_{\{I_{i_1,j_1}=I_{n_1},\ldots, I_{i_k,j_k}=I_{n_k}\}} E\big(1_{\{U_{i_1,j_1}=u_1,\ldots,U_{i_k,j_k}=u_k\}} \big)\\
&=&P\big(U_{i_1,j_1}=u_1,\ldots,U_{i_k,j_k}=u_k \big) \sum_{\substack{\text{distinct}\\(i_1,j_1),\ldots,(i_k,j_k)}} 1_{\{I_{i_1,j_1}=I_{n_1},\ldots, I_{i_k,j_k}=I_{n_k}\}} \\
&=&P\big(U_{i_1,j_1}=u_1\big)\cdot\ldots\cdot P\big(U_{i_k,j_k}=u_k \big)\\
&=&P\big(V_{n_1}=u_1\big)\cdot\ldots\cdot P\big(V_{n_k}=u_k \big),
\end{eqnarray*}
and therefore $(V_n)_{n\ge1}$ is an independent sequence of random variables, also independent of $W$.

Conversely, if $(I_{i,j})_{i,j\ge1}$ and $(U_{i,j})_{i,j\ge 1}$ are the interval excursions of $Y$ (same as of $W$) and the restriction of $U$ to these intervals as in Definition \ref{iid sign choice}, in order to prove the claim we have to show $(U_{i,j})_{i,j\ge 1}$ is an i.i.d. sequence of random variables with the prescribed distribution, and it is also independent of $\mathcal{F}^Y$. The proof being similar with the above, we omit it.
\end{proof}

Summarizing the previous discussion we conclude that the skew Brownian motion with parameter $\alpha\in[0,1]$ can be constructed as a strong Markov process $(X^\alpha, \mathbb{G},P,P^b)$ from a reflecting Brownian motion $Y$ on $[0,\infty)$ and an i.i.d. sign choice for $Y$ which takes the value $1$ with probability $\alpha$, in the sense of Definition \ref{iid sign choice}. Moreover, from the above construction it can be seen that $X^\alpha_t=U_tY_t$ for all $t\ge 0$.
\end{document}